\numberwithin{equation}{section}
\newtheorem{theorem}{Theorem}
\newtheorem{proposition}{Proposition}[section]
\newtheorem{lemma}{Lemma}[section]
\newtheorem{corollary}{Corollary}[section]
\theoremstyle{remark}
\newtheorem{remark}{Remark}[section]
\theoremstyle{definition}
\newtheorem{definition}{Definition}[section]
\numberwithin{equation}{section}
\newcommand{\N}{\mathbb{N}}
\newcommand{\R}{\mathbb{R}}
\newcommand{\norm}[1]{\left\Vert #1 \right\Vert}
\newcommand{\abs}[1]{\left\vert #1 \right\vert}
\newcommand{\st}{\,:\,}
\DeclareMathOperator{\supp}{supp}
\newcommand{\dx}{\,\mathrm{d}x}
\renewcommand{\d}{\,\mathrm{d}}
\newcommand{\eps}{\varepsilon}
\DeclareMathOperator{\dist}{dist}
\DeclareMathOperator{\Lip}{Lip}
\newcommand{\grad}{\nabla}
\newcommand{\one}{\chi}
\pgfplotsset{compat=1.17}
\begin{document}

\title{Convergence rates of the fractional to the local Dirichlet problem}

\author{Leon Bungert%
\thanks{%
Institute of Mathematics, Center of Artifical Intelligence and Data Science (CAIDAS), University of Würzburg, Emil-Fischer-Str. 40, 97074 Würzburg, Germany. \href{mailto:leon.bungert@uni-wuerzburg.de}{leon.bungert@uni-wuerzburg.de}
}
\and 
Félix del Teso%
\thanks{Departamento de Matemáticas, Universidad Autónoma de Madrid, Campus de Cantoblanco, 28049 Madrid, Spain. \href{mailto:felix.delteso@uam.es}{felix.delteso@uam.es}
}
}

\maketitle

\begin{abstract}
    We prove non-asymptotic rates of convergence in the $W^{s,2}(\R^d)$-norm for the solution of the fractional Dirichlet problem to the solution of the local Dirichlet problem as $s\uparrow 1$. 
    For regular enough boundary values we get a rate of order $\sqrt{1-s}$, while for less regular data the rate is of order $\sqrt{(1-s)\abs{\log(1-s)}}$. We also obtain results when the right hand side depends on $s$, and our error estimates are true for all $s\in(0,1)$.
    The proofs use variational arguments to deduce rates in the fractional Sobolev norm from energy estimates between the fractional and the standard Dirichlet energy.
\end{abstract}



\section{Introduction}

In this paper we are concerned with the Dirichlet problem for the fractional Laplace operator with fractional power $s\in(0,1)$ and we will study at which rate it converges to the Dirichlet problem for the standard local Laplace operator as $s$ tends to $1$. 
To set the scene let us consider, for $s\in(0,1)$, the fractional Dirichlet problem
\begin{align}\label{eq:fld}
    \begin{dcases}
        (-\Delta)^s u_s = f_s,\qquad&\text{in }\Omega,\\
        u_s = g,\qquad&\text{on }\R^d\setminus\Omega,
    \end{dcases}    
\end{align}
where $f_s$ and $g$ are given functions and $\Omega$ is a bounded Lipschitz domain in $\R^d$. 
In our convention, the fractional Laplacian $(-\Delta)^s$ of a smooth function $\phi$ is defined as the hypersingular integral
\begin{align*}
    (-\Delta)^s \phi(x) := 4C_{s,d}\operatorname{P.V.}\int_{|x-y|>0}
    \frac{\phi(x)-\phi(y)}{\abs{x-y}^{d+2s}}\d y, \quad \textup{with} \quad C_{s,d}:=\frac{d}{\omega_d}(1-s),
\end{align*}
where $\omega_d$ denotes the $(d-1)$-dimensional Hausdorff measure of $\partial B_1\subset\R^d$, the unit sphere in $\R^d$.
The choice of our normalization constant $C_{s,d}$ is for convenience in the proofs and the more standard normalization constant can be used just as well (see \cref{sec:normalization} for a brief discussion).
The aim of this paper is to obtain non-asymptotic error estimates between $u_s$ and the solution of the local Dirichlet problem
\begin{align}\label{eq:ld}
    \begin{dcases}
        -\Delta u = f,\qquad&\text{in }\Omega,\\
        u = g,\qquad&\text{on }\partial\Omega,
    \end{dcases}    
\end{align}
which arises as a limit of \labelcref{eq:fld} as $s\uparrow 1$.
The fractional Laplacian and fractional Sobolev spaces have been extensively investigated in the literature and we refer to \cite{di2012hitchhikers,leoni2023first,garofalo2018fractionalthoughts} for an overview of the field.

A key result which connects fractional with standard Sobolev spaces is the theory by Bourgain--Brezis--Mironescu~\cite{Brezis2001Sobolev}, who proved that fractional Sobolev semi-norms converge to standard ones as $s\uparrow 1$. See also \cite{Fog23} for generalizations and \cite{Dav02} for a BV version.
While this does not yet allow to prove that minimizers of energies involving the fractional Sobolev semi-norm converge to corresponding minimizers as $s\uparrow 1$, in \cite{ponce2004new} Gamma-convergence of the semi-norms was proved. 
See also \cite{FoKaVo20} for a general Mosco-convergence result.
Using this or, alternatively, relying on PDE tools as done in \cite{biccari2018poisson} one can prove that solutions of \labelcref{eq:fld} converge to the solution of \labelcref{eq:ld}. 
Similar results were established for a fractional porous medium equation in \cite{delTeso2017PME}, for a nonlocal divergence theorem in \cite{kassmann2024divergencethm}, for Neumann problems related to the fractional Laplacian in \cite{GrHe24}, for parabolic problems with a nonlocal Laplacian in \cite{AnMaRoTo10}, for nonlocal quasilinear equations in \cite{ChJa17,BjCaFi12}, for fractional eigenvalues and eigenfunctions in \cite{BrPaSq16,FeSa20,Fogthesis}, and for a nonlocal Cahn--Hilliard equation in \cite{AbHu24}.
Notably, the last paper even proved a convergence rate using a sharp consistency result of the associated nonlocal (not fractional!) Laplacian, which is in the same spirit as existing results on the graph Laplacian on random geometric graphs \cite{calder2020calculus,calder2023rates}.
Asymptotic expansions of the fractional $p$-Laplacian and their local limits were investigated in \cite{BuSq22,dTMeOc23,dTEnLe22} and stable cones for the fractional perimeter with $s$ close to $1$ were studied in \cite{CaCiSe20}.

The \textbf{purpose of this paper is to prove rates of convergence} of the solutions of \labelcref{eq:fld} to the solution of \labelcref{eq:ld} in a strong Sobolev norm and in a non-asymptotic way---meaning the rates are valid for all $s\in(0,1)$ and not just in the limit $s\uparrow 1$.
As we explain in \cref{sec:outlook}, our framework, which relies on a quantitative variational approach, also applies to other fractional problems. 
The approach works by taking solutions of \labelcref{eq:fld,eq:ld} and using them to construct feasible competitors of the other variational problem, respectively.
These competitors will be constructed in such a way that their energies are under good control, which will allow us to derive the final rates.
Since the solution of the local problem \labelcref{eq:ld} is automatically feasible for the nonlocal one \labelcref{eq:fld} (in the sense that it belongs to the correct functional space of the energy minimization problem), the challenging part is to build a competitor for the local problem. For this we shall use a tailored mollification procedure which maps $W^{s,2}(\R^d)$-functions into $W^{1,2}(\R^d)$-functions with a quantitative control of the latter norm. 
Furthermore, since such a mollification alters the boundary values of a function we have to perform adaptions close to the boundary which require careful tail estimate for the used mollification kernel which has only algebraic decay.

Related techniques have been used for discrete to continuum limits of linear and nonlinear PDEs on graphs in a series of works~\cite{burago2015graph,garcia2020error,garcia2022graph,trillos2018variational,calder2022improved,calder2020calculus,bungert2024poissonlearning}.
We also refer to \cite{kreisbeck2024nonconstant} and the references therein for a related framework, albeit without rates.

The final rates we obtain are  \[\norm{u_s-u}_{W^{s,2}(\R^d)}^2\lesssim{1-s+\norm{f-f_s}_{L^1(\Omega)}}\]  for a regular enough boundary value $g$ (see \cref{thm:main,cor:main_regular_bdry}) and
\[\norm{u_s-u}_{W^{s,2}(\R^d)}^2\lesssim{(1-s)|\log(1-s)|+\norm{f-f_s}_{L^1(\Omega)}}\] for less regular ones  (see \cref{thm:less_regular_bdry}).
To the best of our knowledge, the only other rigorous convergence result with a rate can be found in \cite{saldana2023differentiability}, where the authors proved that asymptotically as $s\uparrow 1$ it holds $\norm{u_s-u}_{L^\infty(\Omega)}\lesssim 1-s$.
For this they used that for regular enough right hand sides $f_s=f$ and for $g\equiv 0$ the map $s\mapsto u_s$ is continuously differentiable in $s$ (see \cite{jarohs2020new}).
Note that this result does not imply that our rate is not sharp, since we obtain a rate in the $W^{s,2}(\R^d)$-norm, which is not comparable to the $L^\infty(\R^d)$-norm. 
The $W^{s,2}(\R^d)$-convergence rate which we prove was conjectured but not proved in \cite{biccari2018poisson}.
Furthermore, the numerical examples in this paper indicate the rate is sharp and future work will focus on proving this.

\subsection{Notation}

A function $\phi:\R^d\to\R$ is called $\beta$-Hölder for $\beta\in(0,1]$ if its Hölder-seminorm
\begin{align*}
    [\phi]_{C^{0,\beta}(\R^d)} := \sup_{\substack{x,y\in\R^d\\x\neq y}}\frac{\abs{\phi(x)-\phi(y)}}{\abs{x-y}^\beta}
\end{align*}
is finite.
If $\phi$ is also bounded, we write $\phi\in C^{0,\beta}(\R^d)$.
Analogously, we denote higher-order Hölder spaces by $C^{k,\beta}(\R^d)$ for $k\in\N_0$ and higher-order Hölder seminorms by
\begin{align*}
    [\phi]_{C^{k,\beta}(\R^d)}
    :=
    \max_{\abs{\gamma}=k}
    [D^\gamma\phi]_{C^{0,\beta}(\R^d)},
\end{align*}
where $\gamma\in\N^d$ denotes a multi-index and $D^\gamma\phi:=\partial_{\gamma_1}\dots\partial_{\gamma_d}\phi$.

Let us also define, for $s\in(0,1)$, the $W^{s,2}(\R^d)$-seminorm
\begin{align*}
    [\phi]_{W^{s,2}(\R^d)}^2
    := 
    \frac{2d}{\omega_d}
    (1-s)
    \int_{\R^d}
    \int_{\R^d}
    \frac{\abs{\phi(x)-\phi(y)}^2}{\abs{x-y}^{d+2s}}
    \d y \d x,
\end{align*}
which is scaled in such a way that 
\begin{align*}
    \lim_{s\uparrow 1}[\phi]_{W^{s,2}(\R^d)}^2 = \int_\Omega\abs{\nabla\phi(x)}^2\d x=:[\phi]_{W^{1,2}(\R^d)}^2,\qquad
    \forall \phi\in W^{1,2}(\R^d).
\end{align*}
A function $\phi$ is said to belong to $W^{s,2}(\R^d)$ for $s\in(0,1]$ if the following norm is finite,
\begin{align*}
    \norm{\phi}_{W^{s,2}(\R^d)}^2 := \norm{\phi}_{L^2(\R^d)}^2 + [\phi]_{W^{s,2}(\R^d)}^2.
\end{align*}
For prescribing the right decay for the boundary datum $g$ at infinity we also define (potentially in conflict with other conventions in the literature and omitting any normalization constants) the following homogeneous vector-valued fractional space 
\begin{align*}
    \dot W^{s,1}(\R^d) := \left\{\phi : \R^d\to\R^d\text{ measurable}\st[\phi]_{\dot W^{s,1}(\R^d)}:= \int_{\R^d}\int_{B_1(x)}\frac{\abs{\phi(x)-\phi(y)}}{\abs{x-y}^{d+s}}\d y\d x<\infty\right\}.
\end{align*}

\subsection{Main results}
\label{sec:main_results}

Our tacit assumption for the rest of the paper is that $\Omega$ is a bounded Lipschitz domain such that we can use the Poincaré inequality.
The first main result is the following convergence rate in the $W^{s,2}(\R^d)$-norm.

\begin{theorem}[$W^{s,2}(\R^d)$-convergence rate with zero boundary condition]\label{thm:main}
Assume $s\in(0,1)$, $f_s,f\in L^\infty(\Omega)$ and $g\equiv 0$ in $\R^d\setminus\Omega$. Let $u_s\in W^{s,2}(\R^d)$ and $u\in W^{1,2}(\R^d)$ be the solutions of \labelcref{eq:fld,eq:ld} respectively. Then, there exists a constant $C=C(\Omega,d,u,u_s,f,f_s)$ such that
\begin{align*}
    \norm{u-u_s}_{W^{s,2}(\R^d)}^2 \leq  C \left(1-s+\norm{f_s-f}_{L^1(\Omega)}\right).
\end{align*}
More precisely, the behaviour of $C$ is
\[
C= K(\Omega,d,u,f)\left(
\|u_s\|_{C^{0,s}(\R^d)}+\|u_s\|_{C^{0,s}(\R^d)}^2 + \|f_s\|_{L^\infty(\R^d)} \right).
\]
\end{theorem}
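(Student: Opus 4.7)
The strategy is a quantitative variational comparison. Since both $u$ and $u_s$ vanish on $\R^d\setminus\Omega$, the difference $u-u_s$ is admissible for the fractional problem; testing the weak form of \labelcref{eq:fld} against it and expanding the quadratic form yields the energy identity
\begin{equation*}
    \tfrac{1}{2}[u-u_s]_{W^{s,2}(\R^d)}^2 = \mathcal{E}_s(u)-\mathcal{E}_s(u_s), \qquad \mathcal{E}_s(v)\defeq\tfrac{1}{2}[v]_{W^{s,2}(\R^d)}^2 - \int_{\Omega} f_s v,
\end{equation*}
reducing the task to bounding $\mathcal{E}_s(u)-\mathcal{E}_s(u_s)$ from above by $C\bigl(1-s+\|f-f_s\|_{L^1(\Omega)}\bigr)$.

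To interpolate through the local energy $\mathcal{E}(v)\defeq\tfrac{1}{2}[v]_{W^{1,2}(\R^d)}^2-\int_\Omega fv$, I introduce a competitor $\tilde u_s\in W^{1,2}(\R^d)$ for the local problem (so $\tilde u_s=0$ on $\partial\Omega$) and split
\begin{equation*}
    \mathcal{E}_s(u)-\mathcal{E}_s(u_s) = \underbrace{\bigl[\mathcal{E}_s(u)-\mathcal{E}(u)\bigr]}_{(I)} + \underbrace{\bigl[\mathcal{E}(u)-\mathcal{E}(\tilde u_s)\bigr]}_{\leq 0 \text{ by minimality}} + \underbrace{\bigl[\mathcal{E}(\tilde u_s)-\mathcal{E}_s(u_s)\bigr]}_{(II)}.
\end{equation*}
Term $(I)$ equals $\tfrac{1}{2}\bigl([u]_{W^{s,2}(\R^d)}^2-[u]_{W^{1,2}(\R^d)}^2\bigr)+\int(f-f_s)u$: the first summand is a quantitative Bourgain--Brezis--Mironescu defect, controlled by $O(1-s)$ via elliptic regularity of $u$ (since $f\in L^\infty(\Omega)$, $g\equiv 0$ and $\Omega$ is Lipschitz, $u$ is Lipschitz-like on bounded sets, allowing a first-order Taylor expansion combined with a near/far splitting of the double integral against the kernel $|x-y|^{-d-2s}$), while the second is bounded directly by $\|u\|_{L^\infty}\|f-f_s\|_{L^1(\Omega)}$.

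The heart of the argument is $(II)$, which demands a quantitative mollification scheme. I would build $\tilde u_s$ from the convolution $u_s\ast\eta_\delta$ with a kernel $\eta_\delta$ of algebraic decay tailored to the $W^{s,2}$-seminorm, so that, using $\int\nabla\eta_\delta=0$, Cauchy--Schwarz, and careful normalization,
\begin{equation*}
    [u_s\ast\eta_\delta]_{W^{1,2}(\R^d)}^2 \leq [u_s]_{W^{s,2}(\R^d)}^2 + C(1-s),
\end{equation*}
and then subtract a correction within a boundary strip of width $\sim\delta$ to enforce $\tilde u_s=0$ on $\partial\Omega$. Since $u_s\in C^{0,s}(\R^d)$ vanishes outside $\Omega$, the seminorm $\|u_s\|_{C^{0,s}(\R^d)}$ governs this correction, which is the source of the quadratic dependence on $\|u_s\|_{C^{0,s}}$ in the final constant; the linear contributions in $(II)$ are handled by $\|f_s\|_{L^\infty}\|\tilde u_s-u_s\|_{L^1}+\|f-f_s\|_{L^1}\|u_s\|_{L^\infty}$. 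The \textbf{main obstacle} is this mollification/boundary-correction step: one must simultaneously control the gradient energy and the boundary-layer error with an algebraic-decay kernel, choosing a single scale $\delta$ that makes both contributions of order $1-s$, and tracking the $(1-s)$ factor in front of the $W^{s,2}$-seminorm all the way through.
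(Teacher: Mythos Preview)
Your proposal follows essentially the same route as the paper: the energy identity $\tfrac12[u-u_s]_{W^{s,2}}^2=J_s(u)-J_s(u_s)$, the consistency bound $J_s(u)\le J(u)+O(1-s)+\|u\|_{L^\infty}\|f-f_s\|_{L^1}$, minimality $J(u)\le J(\tilde u_s)$, and the construction of $\tilde u_s$ by mollifying $u_s$ with a kernel adapted to the fractional energy and then correcting in a boundary strip using the $C^{0,s}$-regularity of $u_s$.

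Two small points. First, for term~$(I)$ you invoke ``$u$ is Lipschitz-like'' via elliptic regularity on a Lipschitz domain; this is neither obviously true (Lipschitz domains with $L^\infty$ data do not in general produce Lipschitz solutions) nor needed---the paper's Proposition~2.1 shows $[u]_{W^{s,2}}^2\le[u]_{W^{1,2}}^2+\tfrac{2d}{s}\|u\|_{L^2}^2(1-s)$ for any $u\in W^{1,2}(\R^d)$, using only the fundamental-theorem representation $u(x)-u(y)=\int_0^1\nabla u(x+t(y-x))\cdot(y-x)\,dt$ on the near part and the $L^2$-bound on the far part. Second, you only treat the seminorm $[u-u_s]_{W^{s,2}}$; the full $W^{s,2}$-norm in the statement also requires controlling $\|u-u_s\|_{L^2(\Omega)}$, which the paper handles in a separate step via Poincar\'e and the same energy-gap estimates (Step~2 of the proof).
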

\begin{remark}[Uniformly bounded Hölder norms]\label{rem:hölder} 
We want to stress that, if the domain $\Omega$ satisfies an exterior ball condition and, hence, has no inward corners, the solution $u_s$ of the nonlocal problem belongs to the space $C^{0,s}(\R^d)$ since $f_s\in L^\infty(\R^d)$ by the classical result of Ros-Oton and Serra \cite{ros-oton2014dirichlet}. 
Moreover, the constant $C$ in the above theorem  can be bounded independently of $\|u_s\|_{C^{0,s}(\R^d)}$.
This follows from the revisit of the results in \cite{ros-oton2014dirichlet} given in Lemma 3.4 (ii) in \cite{jarohs2020new}, which states that
\[
\|u_s\|_{C^{0,s}(\R^d)} \leq \tilde{C} \|f_s\|_{L^\infty(\R^d)} 
\]
for a constant $\tilde{C}$ depending only on the domain $\Omega$.
\end{remark}

\cref{thm:main}, which is proved in \cref{sec:proof_main}, has an important corollary that we state in the following.
We note that the assumption $g\equiv 0$ in $\R^d\setminus\Omega$ is no big restriction since one can incorporate non-zero and sufficiently smooth boundary conditions $g$ into the right hand side $f_s$ and get the same convergence rate.

\begin{corollary}[$W^{s,2}(\R^d)$-convergence rate with smooth boundary conditions] \label{cor:main_regular_bdry}
Assume $s\in(0,1)$, $f_s,f\in L^\infty(\Omega)$ and $g\in C^{2,\alpha}(\R^d)$ for some $\alpha>0$. Let $u_s\in W^{s,2}(\R^d)$ and $u\in W^{1,2}(\R^d)$ be the solutions of \labelcref{eq:fld,eq:ld} respectively. Then, there exists a constant $C=C(\Omega,d,u,u_s,f,f_s,g)$ such that
\begin{align*}
    \norm{u-u_s}_{W^{s,2}(\R^d)}^2 \leq  C \left(1-s+\norm{f_s-f}_{L^1(\Omega)}\right).
\end{align*}
More precisely, the behaviour of $C$ is
\[
C= K(\Omega,d,u,f)\left(\|u_s\|_{C^{0,s}(\R^d)}+\|u_s\|_{C^{0,s}(\R^d)}^2 + \|f_s\|_{L^\infty(\R^d)} + \alpha^{-1}\|g\|_{C^{2,\alpha}(\R^d)} \right).
\]
\end{corollary}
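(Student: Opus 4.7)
The plan is to reduce to the zero-boundary case covered by \cref{thm:main} by subtracting off $g$. Setting $v_s\defeq u_s-g$ and $v\defeq u-g$, both vanish outside $\Omega$, and by linearity they solve \labelcref{eq:fld,eq:ld} with zero boundary data and modified sources
\[
    \tilde f_s \defeq f_s-(-\Delta)^s g,\qquad \tilde f \defeq f+\Delta g.
\]
Since $u-u_s=v-v_s$, the claim follows by applying \cref{thm:main} to $(v_s,v)$ with right-hand sides $(\tilde f_s,\tilde f)$ and translating the resulting constants back.

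The key new ingredient is the quantitative consistency estimate
\begin{equation}\label{eq:consistency-plan}
    \norm{(-\Delta)^s g-(-\Delta)g}_{L^\infty(\R^d)}\leq\frac{C(d)}{\alpha}(1-s)\norm{g}_{C^{2,\alpha}(\R^d)}\qquad\text{for all }s\in(0,1).
\end{equation}
To prove this I would write $(-\Delta)^s g$ in symmetric form $-2C_{s,d}\int(g(x+z)+g(x-z)-2g(x))|z|^{-d-2s}\d z$ and split the $z$-integral at $|z|=1$. On $\{|z|<1\}$ a second-order Taylor expansion isolates the quadratic term $z^T D^2 g(x) z$, whose integral evaluates via $\int_{|z|<1}z_iz_j|z|^{-d-2s}\d z=\delta_{ij}\omega_d/(d(2-2s))$ to produce precisely $-\Delta g(x)$ after combining with the normalization $C_{s,d}=\frac{d}{\omega_d}(1-s)$; the remaining Hölder error contributes $(1-s)[D^2 g]_{C^{0,\alpha}}\int_0^1 r^{1+\alpha-2s}\d r\lesssim(1-s)\alpha^{-1}$. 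On $\{|z|\geq 1\}$ the integrand is bounded by $4\norm{g}_\infty|z|^{-d-2s}$, giving a contribution of order $(1-s)$ for $s$ bounded away from $0$; the regime of small $s$ is absorbed using a crude uniform $L^\infty$-bound on $(-\Delta)^s g-(-\Delta)g$ combined with the trivial estimate $1-s\geq 1/2$. A byproduct of \labelcref{eq:consistency-plan} is $\norm{(-\Delta)^s g}_{L^\infty}\lesssim\alpha^{-1}\norm{g}_{C^{2,\alpha}}$ uniformly in $s$, so in particular $\tilde f_s,\tilde f\in L^\infty(\Omega)$ as required by \cref{thm:main}.

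Applying \cref{thm:main} to the pair $(v_s,v)$ and using \labelcref{eq:consistency-plan}, one estimates $\norm{\tilde f_s-\tilde f}_{L^1(\Omega)}\leq\norm{f_s-f}_{L^1(\Omega)}+C(d)\abs{\Omega}\alpha^{-1}(1-s)\norm{g}_{C^{2,\alpha}(\R^d)}$, so the extra term is absorbed into the $(1-s)$ on the right at the cost of a factor $1+\alpha^{-1}\norm{g}_{C^{2,\alpha}}$ in the multiplicative constant. Simultaneously $\norm{v_s}_{C^{0,s}(\R^d)}\leq\norm{u_s}_{C^{0,s}(\R^d)}+\norm{g}_{C^{0,s}(\R^d)}\lesssim\norm{u_s}_{C^{0,s}(\R^d)}+\norm{g}_{C^{2,\alpha}(\R^d)}$ by the elementary interpolation $[g]_{C^{0,s}}\leq(2\norm{g}_\infty)^{1-s}\norm{\grad g}_\infty^s$, and $\norm{\tilde f_s}_{L^\infty}\leq\norm{f_s}_{L^\infty}+C(d)\alpha^{-1}\norm{g}_{C^{2,\alpha}(\R^d)}$. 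Collecting these terms produces exactly the claimed form of $C$. The main obstacle is proving \labelcref{eq:consistency-plan} with the sharp $\alpha^{-1}$ dependence: this blow-up arises from the integrability threshold of the Hölder remainder near $s=1$ and makes transparent why mere $C^2$ regularity of $g$ would not suffice for a rate linear in $1-s$.
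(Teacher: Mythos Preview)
Your proposal is correct and follows essentially the same approach as the paper: subtract $g$ to reduce to the homogeneous case, invoke the pointwise consistency $\abs{(-\Delta)^s g-(-\Delta)g}\lesssim\alpha^{-1}(1-s)\norm{g}_{C^{2,\alpha}}$ (proved in the paper as \cref{lem:pointwise_consistency} with exactly the same Taylor-plus-tail splitting you sketch), and then apply \cref{thm:main} to the shifted problem. Your bookkeeping of how the constants for $v_s$ and $\tilde f_s$ relate to those for $u_s$, $f_s$, and $g$ is in fact more explicit than what the paper records.
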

\begin{remark}\label{rem:hölder2}
    Under the same conditions as in \cref{rem:hölder} the uniform bound $\norm{u_s}_{C^{0,s}(\R^d)} \lesssim (\|f_s\|_{L^\infty(\R^d)} + \alpha^{-1}\|g\|_{C^{2,\alpha}(\R^d)})$ holds.
\end{remark}
If the boundary data $g$ is less regular, we can also get a convergence rate, which is slightly slower by a logarithmic factor. This is not a direct consequence of \cref{thm:main}, and needs some nontrivial adaptations. 
\begin{theorem}[$W^{s,2}(\R^d)$-convergence rate with less regular boundary conditions]\label{thm:less_regular_bdry}
    Assume $s\in(0,1)$, $f_s,f\in L^\infty(\Omega)$ and $g\in C^{1,\alpha}(\R^d)\cap W^{s,2}(\R^d)$ and $\nabla g\in\dot W^{\beta,1}(\R^d)$ for some $\alpha,\beta\in(0,1]$. Let $u_s\in W^{s,2}(\R^d)\cap C^{0,s}(\R^d)$ and $u\in W^{1,2}(\R^d)$ be the solutions of \labelcref{eq:fld,eq:ld} respectively. Then, there exists a constant $C=C(\Omega,d,u,u_s,f,f_s,g)$ such that
\begin{align*}
    \norm{u-u_s}_{W^{s,2}(\R^d)}^2 \leq  C \left((1-s)\abs{\log\left(1-s\right)}+\norm{f_s-f}_{L^1(\Omega)}\right).
\end{align*}
More precisely, the behaviour of $C$ is
\begin{align*}
    C&= K(\Omega,d,u,f)\Big(\|u_s\|_{C^{0,s}(\R^d)}+\|u_s\|_{C^{0,s}(\R^d)}^2 + \|f_s\|_{L^\infty(\R^d)} \\
    &\qquad\qquad\qquad\qquad + \alpha^{-1}\|g\|_{C^{1,\alpha}(\R^d)}+\norm{g}_{C^{0,1}(\R^d)}[g]_{\dot W^{\beta,1}(\R^d)} \Big).
\end{align*}
\end{theorem}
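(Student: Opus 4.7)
The plan is to follow the same variational framework as in the proof of Theorem~\ref{thm:main}, but the trick that turns it into Corollary~\ref{cor:main_regular_bdry}---subtracting $g$ from both solutions to reduce to zero boundary data---is no longer available, because for $g\in C^{1,\alpha}(\R^d)$ the classical Laplacian $\Delta g$ needed on the right hand side is not well-defined as a bounded function. I would therefore redo the variational argument directly and account for the regularity of $g$ in the boundary correction. Since $g\in W^{s,2}(\R^d)$ and $u-g\in W^{1,2}_0(\Omega)\subset W^{s,2}_0(\Omega)$, the local solution $u$ is admissible for the fractional energy $E_s$, and strong convexity gives $[u-u_s]_{W^{s,2}(\R^d)}^2\leq 2\bigl(E_s(u)-E_s(u_s)\bigr)$; the Poincaré-type bounds already used in Theorem~\ref{thm:main} then upgrade this to the full $W^{s,2}(\R^d)$-norm.

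Next I would split $E_s(u)-E_s(u_s)\leq\bigl(E_s(u)-E(u)\bigr)+\bigl(E(\tilde u_s)-E_s(u_s)\bigr)$ for any admissible local competitor $\tilde u_s$. The first bracket is a Bourgain--Brezis--Mironescu type consistency error of size $O(1-s)$ and follows from the higher regularity that $u$ inherits from $f$ and $g\in C^{1,\alpha}(\R^d)$. For the second bracket I would reuse the tailored mollifier $\rho_\eps$ from the proof of Theorem~\ref{thm:main}, whose gradient satisfies a pointwise identity linking $\abs{\nabla(\rho_\eps\ast u_s)}^2$ to the fractional difference quotients of $u_s$, and set $\tilde u_s=\eta_\delta\,(\rho_\eps\ast u_s)+(1-\eta_\delta)\,g$ with a smooth cutoff $\eta_\delta$ equal to $1$ on $\{x\in\Omega:\dist(x,\partial\Omega)>\delta\}$ and vanishing outside~$\Omega$; by construction $\tilde u_s=g$ on~$\partial\Omega$ and $\tilde u_s$ is admissible for the local problem.

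The main obstacle is controlling the boundary layer contribution to $\int\abs{\nabla\tilde u_s}^2$. Expanding the gradient produces a term $\nabla\eta_\delta\cdot(\rho_\eps\ast u_s-g)$ supported in a $\delta$-neighbourhood of $\partial\Omega$, and I would split the difference as $\rho_\eps\ast(u_s-g)+(\rho_\eps\ast g-g)$. The first piece is small near the boundary thanks to the $C^{0,s}$ estimate of Ros-Oton--Serra applied to $u_s-g$, which also produces the $\norm{u_s}_{C^{0,s}(\R^d)}$ terms in the final constant. The second piece is where the assumption $\nabla g\in\dot W^{\beta,1}(\R^d)$ enters: writing $\rho_\eps\ast g(x)-g(x)=\int\rho_\eps(h)\int_0^1\nabla g(x+th)\cdot h \d t \d h$ and splitting into a near- and a far-field part, the Lipschitz bound $\norm{g}_{C^{0,1}(\R^d)}$ controls the slowly decaying tail of the mollifier while the fractional seminorm $[g]_{\dot W^{\beta,1}(\R^d)}$ supplies the near-field integrability needed to absorb the $|h|^{-d-2s}$ weight after averaging over $x$. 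Balancing the interior error (which grows as $\eps\to 0$) against the boundary and tail errors (which grow as $\delta\to 0$) and optimizing the parameters in terms of $1-s$ under the constraint $\eps\lesssim\delta$ yields the extra factor $\abs{\log(1-s)}$ compared to Theorem~\ref{thm:main}. The hardest step is turning these three contributions into quantitative estimates whose combined cost matches the product $\norm{g}_{C^{0,1}(\R^d)}[g]_{\dot W^{\beta,1}(\R^d)}$ appearing in the statement.
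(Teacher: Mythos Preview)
The framework is right and matches the paper: strong convexity of $J_s$, the consistency $J_s(u)-J(u)=O(1-s)$, and a competitor $\tilde u_s$ built from the tailored mollification $I_s^\eps[u_s]$ blended with $g$ near $\partial\Omega$. But you have misplaced the role of the assumption $\nabla g\in\dot W^{\beta,1}(\R^d)$, and in doing so you miss the term that actually drives both that hypothesis and the logarithm. The boundary-layer piece $\rho_\eps\ast g-g$ you highlight lives on a set of measure $O(\delta)$ and is already $O(1-s)$ pointwise from $g\in C^{0,1}$ alone; no fractional assumption on $\nabla g$ is needed there. What you do not address is the \emph{exterior} contribution: since $\tilde u_s=g$ on all of $\R^d\setminus\Omega$, the local energy $D(\tilde u_s)$ contains $\int_{\R^d\setminus\Omega}\abs{\nabla g}^2$, and this must be compared with the corresponding part of $D_s(u_s)$. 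The paper does this by passing through $D(I_s^\eps[u_s])\leq D_s^1(u_s)$ and then bounding $\int_{\R^d\setminus\Omega}\bigl|\,\abs{\nabla g}^2-\abs{\nabla I_s^\eps[u_s]}^2\bigr|$ in shells. In the far shell $\{\dist(\cdot,\Omega)>2\}$ one has $I_s^\eps[u_s]=I_s^\eps[g]$, and the integral is over an \emph{unbounded} set; it is here, and only here, that $\nabla g\in\dot W^{\beta,1}(\R^d)$ is used, to obtain $\norm{\nabla g-I_s^\eps[\nabla g]}_{L^1(\R^d)}\lesssim(1-s)[\nabla g]_{\dot W^{\beta,1}}$, which together with $\abs{\nabla g}+\abs{\nabla I_s^\eps[g]}\lesssim\norm{g}_{C^{0,1}}$ produces the product $\norm{g}_{C^{0,1}}[g]_{\dot W^{\beta,1}}$ in the constant.

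The logarithm likewise does not come from balancing an interior error against the layer (the paper's estimates are uniform in the regularization parameter $\eps$, and one simply sends $\eps\to 0$ at the end). It comes from the \emph{intermediate} exterior shell $\partial^2\Omega\setminus\partial^\rho\Omega$, where the tail bound of \cref{prop:tail_bounds} together with a first-order Taylor expansion of $g$ (this is where $C^{1,\alpha}$ enters) leaves an error $\rho+(1-\rho^{2-2s})$. Since $1-\rho^{2-2s}\sim 2(1-s)\abs{\log\rho}$, equating the two forces $\rho\sim(1-s)\abs{\log(1-s)}$. For $g\equiv 0$ the same shell produces the smaller error $(1-\rho^{1-s})^2$, which is already $O(1-s)$ at $\rho=1-s$; this is precisely why \cref{thm:main} avoids the log but \cref{thm:less_regular_bdry} does not.
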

\begin{remark}\label{rem:hoelder?}
    Although we expect that uniform bounds, similar to those in \cite{jarohs2020new}, of the Hölder constants of solutions to \labelcref{eq:fld} with non-zero boundary data $g\in C^{1,\alpha}(\R^d)$ or even less regularity are true, we are not aware of any results in the literature which assert these.
\end{remark}
The proofs of \cref{thm:main,thm:less_regular_bdry,cor:main_regular_bdry} will be presented in \cref{sec:proof_main}, following the strategy outlined in the introduction.

\subsection{Outlook}
\label{sec:outlook}

Here we explain a few natural applications or generalizations of our work.
\begin{itemize}
    \item \textbf{Spectral convergence rates:} The techniques of this paper, in particular the consistency between nonlocal and local energies proved in \cref{prop:consistency,prop:nonlocal2local}, as well as the the modification of the boundary values in \cref{sec:bdry_mod}, can be used in a straightforward way to prove convergence rates for fractional eigenvalues and eigenvectors, cf. \cite{BrPaSq16,FeSa20,Fogthesis}.
    In \cite{garcia2020error} similar techniques were used to prove spectral convergence rates of graph discretizations of the Laplace--Beltrami operator on a manifold. 
    It will be interesting to investigate whether the resulting fractional convergence rates are sharp or whether improved techniques have to be used, as done in \cite{calder2022improved} where the authors improved the results of \cite{garcia2020error} using a combination of variational and PDE techniques.
    \item \textbf{Nonlinear problems:} We expect that the techniques are also useful for fractional $p$-Laplacian problems since we hardly use the linearity of problems \labelcref{eq:fld,eq:ld}.
    At least in one spatial dimension, simple computations show that our techniques apply directly to the fractional $p$-Laplacian or to the convergence of the stochastic tug-of-war formulation of the game-theoretic $p$-Laplacian, proved in \cite{del2022convergence}. 
    In the general case, one would have to change the mollification procedure for constructing local competitors to operator on a length scale which is larger than the one we are using, see \cite{garcia2022graph} for a similar problem in the context of the $1$-Laplacian.    
    This reduces the convergence rate but still allows one to use the flexible variational approach from the present paper.
    \item \textbf{General operators, manifolds:} Further generalizations which we expect to be doable with a little more technical effort include fractional operators with more general kernels, anisotropic weights, or such defined on smooth manifolds instead of Euclidean domains.
\end{itemize}

\section{Variational setting: Consistency and stability results}

For proving the convergence rates, we will use the variational interpretations of the problems \labelcref{eq:fld,eq:ld}. 
For this let us define the fractional kernel $\eta_s:\R_+\to\R_+$, for $s\in(0,1)$, by
\begin{align}\label{eq:def_eta_C}
    \eta_s(t)= \frac{C_{s,d}}{t^{d+2s}} \quad \textup{with} \quad C_{s,d}=\frac{d}{\omega_d}(1-s),
\end{align}
where we recap that $\omega_d$ denotes the $(d-1)$-dimensional Hausdorff measure of the unit sphere in $\R^d$.
Then the $s$-fractional energy is defined as 
\begin{align}\label{eq:fractional_energy_Rd}
    J_s(u) := 
    \int_{\R^d}\int_{\R^d} \eta_s\left(\abs{x-y}\right)\abs{u(x)-u(y)}^2\d y\d x - \int_\Omega f_s(x) u(x) \d x,\quad 
    u \in W^{s,2}(\R^d),
\end{align}
and we consider the minimization problem
\begin{align}\label{eq:nonlocal_problem}
    \min\left\lbrace 
    J_s(u) \st 
    \,u=g\quad \text{on}\quad\R^d\setminus\Omega
    \right\rbrace,
\end{align}
which is equivalent to the weak formulation of the fractional Dirichlet problem \labelcref{eq:fld}.
Similarly, for the local problem we define the energy
\begin{align}\label{eq:local_energy_Rd}
    J(u) := \frac{1}{2} \int_{\R^d}\abs{\grad u(x)}^2\d x - \int_\Omega f(x) u(x) \d x,\qquad u \in W^{1,2}(\R^d).
\end{align}
The associated minimization problem is
\begin{align}\label{eq:local_problem}
    \min\left\lbrace 
    J(u) \st 
    u=g \quad \textup{on} \quad \R^d\setminus\Omega
    \right\rbrace
\end{align}
which is equivalent to weak formulation of the local Dirichlet problem \labelcref{eq:ld}.
For convenience we also define nonlocal and local Dirichlet energies as
\begin{alignat}{2}
    D_s(u) 
    &:= 
    \int_{\R^d}\int_{\R^d} \eta_s\left(\abs{x-y}\right)\abs{u(x)-u(y)}^2\d y\d x,\qquad &&u \in W^{s,2}(\R^d),\label{eq:Denergy-s}\\
    D(u) 
    &:=
    \frac{1}{2} \int_{\R^d}\abs{\grad u(x)}^2\d x ,\qquad &&u \in W^{1,2}(\R^d).\label{eq:Denergy}
\end{alignat}

\subsection{Variational consistency}

First, we show that for a function in $W^{1,2}(\R^d)$ the nonlocal energy can be controlled from above by the local one plus an error term of order $O(1-s)$. This result can be interpreted as a local to nonlocal (upper) consistency of the energies. Full consistency could be obtained for more regular functions, but this is not  necessary for the results of this paper.
\begin{proposition}\label{prop:consistency}
    Let $\phi\in W^{1,2}(\R^d)\cap L^p(\Omega)$ and $f,f_s\in L^q(\Omega)$ where $\frac{1}{p}+\frac{1}{q}=1$.
    Then it holds
    \begin{align*}
    J_s(\phi) \leq J(\phi) + \frac{2d}{s} \|\phi\|_{L^2(\R^d)}^2 (1-s)
    +\norm{\phi}_{L^p(\Omega)}\norm{f-f_s}_{L^q(\Omega)}.
    \end{align*}
\end{proposition}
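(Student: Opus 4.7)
The statement has two inequalities bundled together: an energy comparison $D_s(\phi) \le D(\phi) + \tfrac{2d}{s}\|\phi\|_{L^2(\R^d)}^2(1-s)$, plus a linear-data comparison $-\int_\Omega f_s\phi \le -\int_\Omega f\phi + \|\phi\|_{L^p}\|f-f_s\|_{L^q}$. The second is immediate Hölder: write $\int_\Omega(f-f_s)\phi$ and bound it above by $\|f-f_s\|_{L^q(\Omega)}\|\phi\|_{L^p(\Omega)}$. So the real content is the energy comparison, and the strategy is to split the double integral defining $D_s(\phi)$ according to whether $|x-y|\le 1$ or $|x-y|>1$, bound the near-field piece by $D(\phi)$, and control the far-field piece by the $L^2$-mass term.

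For the \emph{near-field} piece, I write $\phi(y+h)-\phi(y)=\int_0^1 h\cdot\nabla\phi(y+th)\d t$ so that Jensen gives $|\phi(y+h)-\phi(y)|^2\le \int_0^1|h\cdot\nabla\phi(y+th)|^2\d t$. Substituting and switching the order of integration, translation invariance in $y$ lets me pull out $\int_{\R^d}|h\cdot\nabla\phi(z)|^2\d z$. Writing $h=r\sigma$ in spherical coordinates, the radial integral $\int_0^1 r^{1-2s}\d r=\frac{1}{2(1-s)}$ cancels the $(1-s)$ coming from $C_{s,d}$ exactly, and the spherical averaging identity $\int_{S^{d-1}}|\sigma\cdot v|^2\d\sigma=\frac{\omega_d}{d}|v|^2$ together with the prefactor $\frac{d}{\omega_d}$ conspires to give precisely $\tfrac{1}{2}\|\nabla\phi\|_{L^2(\R^d)}^2=D(\phi)$.

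For the \emph{far-field} piece I use the crude bound $|\phi(y+h)-\phi(y)|^2\le 2|\phi(y+h)|^2+2|\phi(y)|^2$ and translation invariance to reduce to $4\|\phi\|_{L^2}^2\int_{|h|>1}|h|^{-d-2s}\d h$. The latter integral equals $\omega_d/(2s)$, and multiplying by the prefactor $\frac{d(1-s)}{\omega_d}$ gives exactly $\frac{2d(1-s)}{s}\|\phi\|_{L^2(\R^d)}^2$, matching the statement.

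The main obstacle I anticipate is that the naive bound $|\phi(y+h)-\phi(y)|^2\le |h|^2\int_0^1|\nabla\phi(y+th)|^2\d t$ loses the directional structure of the gradient and would give an excess factor of $d$ in front of $D(\phi)$ in the near-field; the trick is to keep the inner product $h\cdot\nabla\phi$ intact so that the spherical averaging identity produces the correct sharp constant $\tfrac12$ and hence exactly $D(\phi)$ with no slack—a necessary precision because in the applications to \cref{thm:main} we cannot afford a multiplicative constant larger than $1$ on the local Dirichlet energy. Everything else is an exact elementary computation.
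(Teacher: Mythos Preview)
Your proof is correct and follows essentially the same approach as the paper: the same near/far split at scale $1$, the same fundamental-theorem-of-calculus plus Jensen argument for the near field (your spherical-average identity $\int_{S^{d-1}}|\sigma\cdot v|^2\d\sigma=\tfrac{\omega_d}{d}|v|^2$ is equivalent to the paper's orthogonal change of variables reducing to $|z_1|^2$), and the same crude $|a-b|^2\le 2|a|^2+2|b|^2$ bound for the far field. The only cosmetic difference is that the paper's write-up of the Hölder step uses $L^2$ exponents, while you (consistently with the stated proposition) use the general $L^p/L^q$ pairing.
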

\begin{proof}
We start by studying the $s$-fractional Dirichlet energy $D_s$ given by \labelcref{eq:Denergy-s}.
Let us write $D_s = D^1_s + D^2_s$ where
\begin{align}\label{eq:D1s}
    D^1_s(\phi)&:=\int_{\R^d}\int_{B_1(x)} \eta_s\left(\abs{x-y}\right)\abs{\phi(x)-\phi(y)}^2\d y\d x,
    \\\label{eq:D2s}
    D^2_s(\phi)&:=\int_{\R^d}\int_{\R^d\setminus B_1(x)} \eta_s\left(\abs{x-y}\right)\abs{\phi(x)-\phi(y)}^2\d y\d x.
\end{align}
First we note that
\begin{align*}
D^2_s(\phi)&= \int_{\R^d}\int_{\R^d\setminus B_1} \eta_s\left(\abs{z}\right)\abs{\phi(x)-\phi(x+z)}^2\d z\d x\\
&\leq2 \int_{\R^d\setminus B_1} \eta_s\left(\abs{z}\right) \int_{\R^d}\abs{\phi(x)}^2+\abs{\phi(x+z)}^2\d x\d z\\
&= 4 \|\phi\|_{L^2(\R^d)}^2 \int_{\R^d\setminus B_1} \eta_s\left(\abs{z}\right) \d z\\
&= 4 \|\phi\|_{L^2(\R^d)}^2 \omega_d C_{s,d} \int_1^\infty \frac{r^{d-1}}{r^{d+2s}}\d r\\
&= 4 \|\phi\|_{L^2(\R^d)}^2 \omega_d C_{s,d} \frac{1}{2s}\\
&= \frac{2d}{s}\|\phi\|_{L^2(\R^d)}^2 (1-s),
\end{align*}
where we used the definition of $C_{s,d}$ in \labelcref{eq:def_eta_C} in the last line.
On the other hand, by Jensen's inequality and a change of variables
\begin{align*}
D^1_s(\phi)&=\int_{\R^d}\int_{B_1} \eta_s\left(\abs{z}\right)\abs{\int_0^1 \nabla \phi(x+tz)\cdot z\d t}^2\d z\d x\\
&\leq \int_{\R^d}\int_{B_1} \eta_s\left(\abs{z}\right)\int_0^1 \abs{\nabla \phi(x+tz)\cdot z}^2\d t\d z\d x\\
& = \int_{\R^d}\int_{B_1} \eta_s\left(\abs{z}\right) \abs{\nabla \phi(x)\cdot z}^2\d z\d x.
\end{align*}
In the inner integral we make a change of variables of the form $z\to A^T z$ where $A\in\R^{d\times d}$ is an orthogonal matrix with $A\nabla\phi(x)=\abs{\nabla\phi(x)}e_1$.
This yields
\begin{align*}
    D^1_s(\phi)
    \leq 
    \int_{\R^d}\int_{B_1} \eta_s\left(\abs{z}\right) \abs{z_1}^2\abs{\nabla \phi(x)}^2\d z\d x
    = 
    \left(\int_{B_1} \eta_s\left(\abs{z}\right) \abs{z_1}^2\d z\right)
    \left(\int_{\R^d}\abs{\nabla \phi(x)}^2\d x \right).
\end{align*}
By the definition of $C_{s,d}$, it holds
\begin{align}\label{eq:cte}
\int_{B_1} \eta_s\left(\abs{z}\right) \abs{z_1}^2\d z= \frac{1}{d}\int_{B_1} \eta_s\left(\abs{z}\right) \abs{z}^2\d z = \frac{\omega_d}{d}C_{s,d} \int_0^1 r^{1-2s} \d r=
\frac{\omega_d C_{s,d}}{2d(1-s)}=\frac{1}{2},
\end{align}
leading to $D_s^1(\phi) \leq D(\phi)$.
To complete the proof we consider the linear terms and get
\begin{align*}
    -\int_\Omega f_s \phi \d x 
    &= 
    -\int_\Omega f \phi \d x 
    +\int_\Omega (f-f_s) \phi \d x 
    \\
    &\leq 
    -\int_\Omega f \phi \d x 
    +
    \norm{\phi}_{L^2(\R^d)}
    \norm{f-f_s}_{L^2(\R^d)}.
\end{align*}
Combining these estimates completes the proof.
\end{proof}

\subsection{Stability estimates of the energies}

Now we show that the distance between a minimizer of the energy $J$ or $J_s$, respectively, and an arbitrary function in the minimizing space is controlled by the difference of their energies. 
This can be seen as stability estimates of the energies associated to our problems and ultimately allows us to pull back the convergence rates we prove for the energies to the minimizers themselves.
The proofs are straightforward and simply use the definition of weak solutions.
Therefore, we postpone them to \cref{sec:stability}.

\begin{proposition}[Stability of the local problem]\label{prop:convexity_local}
Let $u\in W^{1,2}(\R^d)$ solve \labelcref{eq:local_problem}
and $C_{\Omega}>0$ be the Poincaré constant of $\Omega$. Then, for all $\phi \in W^{1,2}(\R^d)$ with $\phi=g$ in $\R^d\setminus\Omega$, we have
\begin{align*}
    \frac{1}{2}
    \norm{\nabla\phi-\nabla u}_{L^2(\Omega)}^2
    = J(\phi) - J(u).
\end{align*}
\end{proposition}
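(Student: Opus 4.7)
The plan is to use the standard quadratic structure of $J$ together with the weak (Euler--Lagrange) formulation of the local Dirichlet problem. Since $J$ restricted to the affine space $\{w\in W^{1,2}(\R^d):w=g \text{ on }\R^d\setminus\Omega\}$ is a convex quadratic functional, the difference $J(\phi)-J(u)$ at any admissible competitor $\phi$ is expected to be exactly $\tfrac12\|\nabla(\phi-u)\|_{L^2}^2$, which is what we want to prove.

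First, I would record the Euler--Lagrange characterisation of $u$. Since $u$ minimises $J$ among functions equal to $g$ outside $\Omega$, variations of the form $u+tv$ with $v\in W^{1,2}(\R^d)$ vanishing on $\R^d\setminus\Omega$ are admissible, and differentiating in $t$ at $t=0$ yields
\begin{align*}
    \int_{\R^d} \nabla u\cdot \nabla v\d x = \int_\Omega f\, v \d x
    \qquad\text{for all such } v.
\end{align*}

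Next, given $\phi$ with $\phi=g$ on $\R^d\setminus\Omega$, set $v\defeq \phi-u$. Then $v\in W^{1,2}(\R^d)$ and $v\equiv 0$ on $\R^d\setminus\Omega$, so $\nabla v$ vanishes a.e.\ outside $\Omega$ and in particular $\|\nabla v\|_{L^2(\R^d)}=\|\nabla v\|_{L^2(\Omega)}$; moreover $v$ is an admissible test function in the identity above. Writing $\phi=u+v$ and expanding
\begin{align*}
    J(\phi)-J(u)
    &= \frac{1}{2}\int_{\R^d}\left(|\nabla u+\nabla v|^2-|\nabla u|^2\right)\d x-\int_\Omega f\, v\d x\\
    &= \int_{\R^d}\nabla u\cdot\nabla v\d x+\frac{1}{2}\int_{\R^d}|\nabla v|^2\d x-\int_\Omega f\, v\d x
\end{align*}
and then using the Euler--Lagrange identity to cancel the first and third terms, one is left with $\frac{1}{2}\|\nabla v\|_{L^2(\Omega)}^2$, which is the desired equality.

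There is no real obstacle here: the entire argument is a one-line quadratic expansion plus the weak formulation, and the Poincaré constant plays no role in the equality itself (it presumably enters later when this identity is combined with the $L^2$-part of the $W^{s,2}$-norm to bound $\|\phi-u\|_{W^{1,2}}^2$). The only mild subtlety to state cleanly is that all integrals of $\nabla v$ over $\R^d$ reduce to integrals over $\Omega$ because $v$ vanishes outside $\Omega$, so that the final identity can be written with an $L^2(\Omega)$-norm on the right-hand side exactly as claimed.
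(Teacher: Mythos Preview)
Your proof is correct and follows essentially the same approach as the paper: both expand the quadratic Dirichlet energy at $\phi=u+v$ with $v=\phi-u\in W^{1,2}_0(\Omega)$ and then use the Euler--Lagrange (weak) equation for $u$ to cancel the cross term and the linear term. Your observation that the Poincar\'e constant plays no role in the identity itself is also accurate; the paper's proof does not use it either.
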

Similarly we also have a stability result for solutions of the nonlocal problem. 
\begin{proposition}[Stability of the nonlocal problem]\label{prop:convexity_nonlocal}
Let $u_s\in W^{s,2}(\R^d)$ solve \labelcref{eq:nonlocal_problem}.
For all $\phi \in W^{s,2}(\R^d)$ with $\phi=g$ in $\R^d\setminus\Omega$ it holds
\begin{align*}
    \frac12[\phi - u_s]_{W^{s,2}(\R^d)}^2 = J_s(\phi) - J_s(u_s).
\end{align*}
\end{proposition}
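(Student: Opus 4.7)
The plan is to exploit the quadratic (hence strictly convex on admissible variations) structure of $J_s$ and identify the cross-term via the weak Euler--Lagrange formulation of the nonlocal Dirichlet problem. The argument is the direct nonlocal analogue of \cref{prop:convexity_local}.

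First, I would introduce the admissible variation $v := \phi - u_s$. Since both $\phi$ and $u_s$ coincide with $g$ on $\R^d\setminus\Omega$, we have $v \in W^{s,2}(\R^d)$ with $v \equiv 0$ on $\R^d\setminus\Omega$, so $v$ lies in the correct space of admissible test functions for \labelcref{eq:nonlocal_problem}.

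Next, I would polarize the quadratic form $D_s$ defined in \labelcref{eq:Denergy-s} by introducing the symmetric bilinear form
\begin{align*}
    B_s(u,w) := \int_{\R^d}\int_{\R^d}\eta_s(\abs{x-y})(u(x)-u(y))(w(x)-w(y))\d y\d x,
\end{align*}
which is finite on $W^{s,2}(\R^d)\times W^{s,2}(\R^d)$ by Cauchy--Schwarz applied to the fractional difference quotients. Writing $\phi = u_s + v$ and expanding gives $D_s(\phi) = D_s(u_s) + 2B_s(u_s,v) + D_s(v)$, so that by the definition of $J_s$ in \labelcref{eq:fractional_energy_Rd}
\begin{align*}
    J_s(\phi) - J_s(u_s) = 2B_s(u_s,v) + D_s(v) - \int_\Omega f_s\, v\d x.
\end{align*}

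Then I would invoke the Euler--Lagrange equation for the minimizer $u_s$: since $v$ is an admissible variation (it vanishes on $\R^d\setminus\Omega$), the directional derivative $\frac{d}{d\eps}\big|_{\eps=0} J_s(u_s+\eps v)$ must vanish, which is precisely the weak formulation $2B_s(u_s,v) = \int_\Omega f_s\, v\d x$. Substituting yields $J_s(\phi) - J_s(u_s) = D_s(v)$. Finally, the normalization of $\eta_s$ in \labelcref{eq:def_eta_C} against the $W^{s,2}$-seminorm gives the elementary identity $[v]_{W^{s,2}(\R^d)}^2 = 2D_s(v)$, producing the claimed equality.

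There is no genuine obstacle here; the only things worth verifying carefully are the bilinearity/finiteness of $B_s$ on the relevant space (a straightforward Cauchy--Schwarz argument) and that $v = \phi - u_s$ is indeed admissible in the weak formulation, which is immediate from the boundary agreement of $\phi$ and $u_s$ with $g$. The structure mirrors the local case exactly, so the proof should be quite short once the bilinear form and the Euler--Lagrange identity are recorded.
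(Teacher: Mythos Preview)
Your proposal is correct and follows essentially the same approach as the paper: both arguments expand the quadratic form $D_s(\phi)=D_s(u_s+v)$ (the paper via the scalar identity $(a-b)^2+a^2+2a(b-a)=b^2$, you via the polarized bilinear form $B_s$), eliminate the cross-term using the weak Euler--Lagrange equation for $u_s$ tested against $v=\phi-u_s$, and then identify $D_s(v)=\tfrac12[v]_{W^{s,2}(\R^d)}^2$ from the normalization of $\eta_s$.
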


\section{Adapted smoothing operators}

In this section we will construct an adapted smoothing operator $I_s$ which maps a function in $W^{s,2}(\R^d)$ to one in $W^{1,2}(\R^d)$ in such a way that the Dirichlet energy of the smoothed function is controlled by the nonlocal Dirichlet energy of the function with constant one, i.e., $D(I_s[\phi])\leq D_s(\phi)$.
This last property excludes standard mollification procedures but requires a smoothing operation which is tailored to the fractional kernel in the definition of the nonlocal Dirichlet energy~$D_s$.
For constructing the mollification operator we first define some sort of antiderivative of the fractional kernel which will act as profile function for the smoothing operator.

\begin{definition}[Smoothing kernel]\label{def:kernel_psi_s_eps}
    For $\eps\in[0,1)$, $s\in(0,1)$, and $t\geq 0$ we define
    \begin{align*}
        \psi_s^\eps(t):=\frac{2}{1-\eps^{2-2s}}\left(\int_{\max\{\eps,t\}}^1\eta_s(\tau)\tau\d \tau\right)_+.
    \end{align*}
\end{definition}
Using the definition of the fractional kernel $\eta_s$ in \labelcref{eq:def_eta_C} it is obvious that the support of the kernel $\psi_s^\eps$ is in $[0,1]$ \emph{for all} values of $s\in(0,1)$.
At the same time we will show later that most of its mass is concentrated in the much smaller interval~$[0,1-s]$.

Note that the parameter $\eps>0$ is used to de-singularize the integral for values $t$ close to zero where the fractional kernel $\eta_s$ blows up. 
We will later see that for our purposes we can actually choose $\eps=0$.
First, we prove several properties of this function.
\begin{lemma}\label{lem:smoothlem}
We have the following properties for all $\eps\in[0,1)$:
\begin{enumerate}[\rm (a)]
    \item\label{smoothlem-support}
    $\displaystyle\supp\psi_s^\eps = [0,1]$.
    \item\label{smoothlem-mass} For all $\alpha\geq0$ we have that 
    \[\displaystyle\int_{\R^d} \psi_s^\eps(|z|)
    \abs{z}^\alpha
    \d z=
    \frac{d}{d+\alpha} \cdot\frac{(1-s)}{(1-s)+\frac{\alpha}{2}}\frac{1- \eps^{2-2s+\alpha}}{1-\eps^{2-2s}}.\]
    In particular, for $\alpha=0$, the integral equals one.
    \item\label{smoothlem-bound}  
    If $d\geq 2$, then $\displaystyle\psi_s^\eps(t) \leq \frac{1}{1-\eps^{2-2s}}\frac{2}{d+2s-2}t^2 \eta_{s}(t)$ for all $t\in(0,1)$.
    \item\label{smoothlem-bound_d=1}  
    If $d=1$, then $\displaystyle\psi_s^\eps(t) \leq \frac{2}{1-\eps^{2-2s}}t^{1+s} \eta_{s}(t)$ for all $t\in(0,1)$.
    \item\label{smoothlem-derivative}
    $\displaystyle(\psi^\eps_s)'(t)=-\frac{2}{1-\eps^{2-2s}}\eta_s(t)t \one_{(\eps,1)}(t)$ for all $t\in(0,1)\setminus\{\eps\}$.
\end{enumerate}
\end{lemma}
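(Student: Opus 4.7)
The plan is to verify the five properties in turn, mostly by direct computation from the definition of $\psi_s^\eps$; the only substantive analytic step hides in item (d).

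For (a), the positive part and the fact that $\eta_s(\tau)\tau > 0$ on $(0,1)$ force $\psi_s^\eps(t) > 0$ precisely when $t < 1$, while the upper limit being $1$ makes the integral vanish for $t \geq 1$. For (e), the derivative vanishes on $[0,\eps)$ and on $(1,\infty)$ and equals $-\frac{2}{1-\eps^{2-2s}}\eta_s(t)t$ on $(\eps,1)$ by the fundamental theorem of calculus, which assembles into the stated indicator-function formula. For (b), I would pass to radial coordinates, write the integral as $\omega_d \int_0^1 r^{d+\alpha-1}\psi_s^\eps(r)\d r$, and swap the order of integration to obtain
\begin{align*}
\int_0^1 r^{d+\alpha-1}\int_{\max\{\eps,r\}}^1 \eta_s(\tau)\tau\d\tau\d r = \int_\eps^1 \eta_s(\tau)\tau \cdot \frac{\tau^{d+\alpha}}{d+\alpha}\d\tau.
\end{align*}
Since $\eta_s(\tau)\tau^{d+\alpha+1} = C_{s,d}\tau^{\alpha+1-2s}$, the right-hand integral is elementary, and substituting $C_{s,d} = d(1-s)/\omega_d$ produces the claimed identity, which specializes to $1$ at $\alpha=0$.

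Items (c) and (d) both reduce to controlling $\int_t^1 \eta_s(\tau)\tau\d\tau$ by a multiple of $t^\beta\eta_s(t)$. For $d \geq 2$, the exponent $d+2s-2$ is strictly positive, and a direct primitive calculation gives
\begin{align*}
\int_t^1 C_{s,d}\tau^{1-d-2s}\d\tau \leq \frac{C_{s,d}\,t^{2-d-2s}}{d+2s-2} = \frac{t^2\eta_s(t)}{d+2s-2},
\end{align*}
proving (c). For $d=1$ the constant $d+2s-2 = 2s-1$ changes sign with $s$, so this approach fails and (d) becomes equivalent to the one-variable inequality $\int_t^1 \tau^{-2s}\d\tau \leq t^{-s}$ for all $t \in (0,1]$ and $s \in (0,1)$. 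This is the main obstacle. I would set $F(t) := t^{-s} - \int_t^1 \tau^{-2s}\d\tau$, observe $F(1)=1$ and $F(0^+)=+\infty$ (since $t^{1-2s}$ grows slower than $t^{-s}$ as $t\to 0^+$), and note that $F'(t) = t^{-2s}(1 - st^{s-1})$ vanishes only at $t^* := s^{1/(1-s)} \in (0,1)$, which is therefore a global minimum. Using the defining identity $(t^*)^{1-s} = s$, the inequality $F(t^*) \geq 0$ rearranges algebraically to $s\log s - (1-s)\log(1-s) \leq 0$ for $s < 1/2$ and the reverse inequality for $s > 1/2$. Both follow at once because the auxiliary function $g(s) := s\log s - (1-s)\log(1-s)$ satisfies $g(0) = g(1/2) = g(1) = 0$ together with $g''(s) = (1-2s)/(s(1-s))$, making $g$ convex on $(0,1/2)$ and concave on $(1/2,1)$, each sub-interval with zero endpoint values and hence the stated sign.
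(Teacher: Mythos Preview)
Your proof is correct and follows essentially the same route as the paper: items (a), (e) are immediate from the definition; (b) is the same radial–Fubini computation; and (c) is the same primitive estimate. The only point of divergence is (d). The paper factors the integral as $\frac{2}{1-\eps^{2-2s}}\,t^{1+s}\eta_s(t)\cdot\frac{t^s-t^{1-s}}{1-2s}$ and then simply asserts that the last factor is at most $1$ (treating $s=\tfrac12$ separately via $\sqrt{t}\,\log(1/t)\le 1$). Your formulation $\int_t^1 \tau^{-2s}\,d\tau \le t^{-s}$ is algebraically equivalent, but you actually supply a proof: locating the unique minimum of $F(t)=t^{-s}-\int_t^1\tau^{-2s}\,d\tau$ at $t^*=s^{1/(1-s)}$, reducing $F(t^*)\ge 0$ to a sign statement for $g(s)=s\log s-(1-s)\log(1-s)$, and settling that by the convexity/concavity of $g$ on the two halves of $(0,1)$. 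This is a genuine addition over the paper's argument, which leaves that inequality unproved. The only cosmetic gap is that your algebraic rearrangement of $F(t^*)\ge 0$ uses the closed form $\frac{1-t^{1-2s}}{1-2s}$ and hence tacitly assumes $s\neq\tfrac12$; the missing case follows either by continuity of $F(t^*)$ in $s$ or by the direct check $F(1/4)=2-\log 4>0$.
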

\begin{proof}
\cref{smoothlem-support} is obvious from the definition of $\psi_s^\eps$.
Let us prove \cref{smoothlem-mass}. 
Let us, for convenience, use the abbreviation $M(s,\eps):=\frac{2}{1-\eps^{2-2s}}$. 
Then,
\begin{align*}
  \int_{B_1} \psi_s^\eps(|z|)&|z|^\alpha\d z\\
  &= M(s,\eps) \omega_d \int_0^1 t^{d-1+\alpha} \int_{\max\{\eps,t\}}^1\eta_s(\tau)\tau\d \tau\d t \\
  &= M(s,\eps) \omega_d \int_0^\eps t^{d-1+\alpha} \int_{\eps}^1\eta_s(\tau)\tau\d \tau\d t +M(s,\eps) \omega_d \int_\eps^1 t^{d-1+\alpha} \int_{t}^1\eta_s(\tau)\tau\d \tau\d t \\
  &= M(s,\eps) \omega_d  \int_{\eps}^1\eta_s(\tau)\tau \int_0^\eps t^{d-1+\alpha}\d t \d \tau+M(s,\eps) \omega_d \int_\eps^1 \eta_s(\tau)\tau  \int_{\eps}^\tau t^{d-1+\alpha} \d t \d \tau\\
  &= M(s,\eps) \omega_d  \int_{\eps}^1\eta_s(\tau)\tau \int_0^\tau t^{d-1+\alpha}\d t \d \tau\\
  &= \frac{M(s,\eps) \omega_d}{d+\alpha} \int_{\eps}^1\eta_s(\tau)  \tau^{1+d+\alpha} \d \tau\\
  &=\frac{d}{d+\alpha} \cdot\frac{(1-s)}{(1-s)+\frac{\alpha}{2}} \cdot \frac{1- \eps^{2-2s+\alpha}}{1-\eps^{2-2s}}.
\end{align*}
To prove \cref{smoothlem-bound} we compute for $t\in(0,1)$:
\begin{align*}
    \psi_s^\eps(t)
    &
    \leq
    \frac{2}{1-\eps^{2-2s}}\int_{t}^1 \eta_s(\tau)\tau \d \tau  
    \\
    &= \frac{2}{1-\eps^{2-2s}}C_{s,d} \frac{t^{2-d-2s}-1 }{d+2s-2}
    \\
    &\leq \frac{2}{1-\eps^{2-2s}}\frac{C_{s,d}}{d+2s-2}t^{2-d-2s}.
\end{align*}
Similarly, one can prove \cref{smoothlem-bound_d=1} for $d=1$.
Starting at the second line of the previous chain of inequalities for $d=1$, we have, for $s\not=1/2$ 
\begin{align*}
    \psi_s^\eps(t)
    &\leq 
    \frac{2}{1-\eps^{2-2s}}C_{s,1} \frac{1- t^{1-2s}}{1-2s}
    \\
    &=
    \frac{2}{1-\eps^{2-2s}}
    t^{1+s}
    \eta_s(t)
    \frac{t^s-t^{1-s}}{1-2s}
    \\
    &\leq 
    \frac{2}{1-\eps^{2-2s}}
    t^{1+s}
    \eta_s(t),\qquad\forall t\in(0,1),\,s\in(0,1)\setminus\left\{\frac{1}{2}\right\}.
\end{align*}
If $s=1/2$, the estimate is
\begin{align*}
    \psi_s^\eps(t)
    &\leq 
    \frac{2}{1-\eps} C_{s,1} \log\left(\frac{1}{t}\right)
    =
    \frac{2}{1-\eps}
    t^{1+\frac{1}{2}}
    \eta_\frac{1}{2}(t)
    t^{\frac{1}{2}}\log\left(\frac{1}{t}\right)\\
    &\leq 
    \frac{2}{1-\eps^{2-2s}}
    t^{1+s}
    \eta_s(t),\quad\forall t\in(0,1).
\end{align*}
Finally, \cref{smoothlem-derivative} is a simple consequence of the definition of $\psi_s^\eps$.
\end{proof}

\subsection{Stability and consistency of the mollifier}

Now we will define a convolution operator which is adapted to the fractional problem and prove that it is close to the identity and furthermore is consistent with the Dirichlet energies as outlined above.
\begin{definition}
    Let $\phi\in L^2(\R^d)$. For $\eps\in[0,1)$, we define
    \[
    I_{s}^\eps[\phi](x):= \int_{\R^d} \psi_s^\eps(|x-y|) \phi(y) \d y
    =
    \int_{B_1(x)} \psi_s^\eps(|x-y|) \phi(y) \d y.
    \]
\end{definition}
Using Jensen's inequality it is standard to check that $I_s^\eps : L^p(\R^d)\to L^p(\R^d)$ with $\norm{I_s^\eps[\phi]}_{L^p(\R^d)}\leq\norm{\phi}_{L^p(\R^d)}$ for all $p\in[1,\infty]$.
In fact, restricted on the space $W^{s,2}(\R^d)$, the operator $I_s^\eps$ is an approximation of the identity.
More precisely, we prove that the $L^2(\R^d)$-error between $I_s^\eps[\phi]$ and $\phi$ is of order $\sqrt{1-s}$ for every $W^{s,2}$-function $\phi$.

\begin{proposition}\label{prop:convolution_id}
Let $\phi\in W^{s,2}(\R^d)$. Then for all $\eps\in[0,1)$ and all $s\in(0,1)$ it holds
\[
\|I_s^\eps[\phi]-\phi\|_{L^2(\R^d)}^2 \leq 
\begin{dcases}
    \left(\frac{2}{1-\eps^{2-2s}}\right)^2
    (1-s)
    D_s^1(\phi),
    \qquad&\text{if }d=1,
    \\
    \frac{2d}{(d+2s-2)^2(2-s)}\cdot \frac{1-s}{\left(1-\eps^{2-2s}\right)^2}
    D^1_s(\phi),\qquad&\text{if }d\geq 2,
\end{dcases}
\]
where $D^1_s$ is defined in \labelcref{eq:D1s}.
\end{proposition}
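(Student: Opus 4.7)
The plan is to bound the smoothing error pointwise by a Cauchy--Schwarz inequality with a weight chosen to match the kernel $\eta_s$, so that the resulting factor takes the form (constant)$\times D_s^1(\phi)$. Since $\int_{\R^d}\psi_s^\eps(|z|)\d z=1$ by \cref{lem:smoothlem} (item (b) with $\alpha=0$), I first write the smoothing error as a zero-mean integral,
\[
I_s^\eps[\phi](x)-\phi(x) = \int_{B_1(x)}\psi_s^\eps(|x-y|)\bigl(\phi(y)-\phi(x)\bigr)\d y,
\]
and then split the kernel as $\psi_s^\eps(|x-y|) = \tfrac{\psi_s^\eps(|x-y|)}{\sqrt{\eta_s(|x-y|)}}\cdot\sqrt{\eta_s(|x-y|)}$. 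Cauchy--Schwarz gives
\[
|I_s^\eps[\phi](x)-\phi(x)|^2\leq K(s,\eps,d)\int_{B_1(x)}\eta_s(|x-y|)|\phi(y)-\phi(x)|^2\d y,
\]
where $K(s,\eps,d):=\int_{B_1}[\psi_s^\eps(|z|)]^2/\eta_s(|z|)\d z$ is independent of $x$ by translation invariance. Integrating in $x$ identifies the $x$-integral of the second factor as $D_s^1(\phi)$, so that $\|I_s^\eps[\phi]-\phi\|_{L^2(\R^d)}^2\leq K(s,\eps,d)\,D_s^1(\phi)$.

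The remaining task is to bound $K(s,\eps,d)$ by the two constants stated in the proposition. For $d\geq 2$, I use item (c) of \cref{lem:smoothlem}, $\psi_s^\eps(t)\leq\tfrac{2t^2\eta_s(t)}{(1-\eps^{2-2s})(d+2s-2)}$, and apply it to \emph{both} factors of $\psi_s^\eps$ in the quotient, yielding
\[
\frac{[\psi_s^\eps(t)]^2}{\eta_s(t)}\leq \frac{4\,t^4\eta_s(t)}{(1-\eps^{2-2s})^2(d+2s-2)^2}.
\]
The elementary polynomial integral $\int_{B_1}|z|^4\eta_s(|z|)\d z=\tfrac{d(1-s)}{2(2-s)}$---in which the decisive factor $(1-s)$ enters through the normalization constant $C_{s,d}$ in \labelcref{eq:def_eta_C}---then produces exactly the claimed prefactor $\tfrac{2d(1-s)}{(d+2s-2)^2(2-s)(1-\eps^{2-2s})^2}$. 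The case $d=1$ is entirely analogous: item (d) of \cref{lem:smoothlem} is used in place of item (c), giving an integrand proportional to $|z|^{2+2s}\eta_s(|z|)$, whose integral over $(-1,1)$ is directly computable and again carries the factor $(1-s)$ via $C_{s,1}$.

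I do not anticipate a substantive obstacle; the delicate step---controlling the singular quotient $[\psi_s^\eps]^2/\eta_s$ by an integrable quantity with the correct $(1-s)$ behaviour---has already been performed in \cref{lem:smoothlem}, and the Cauchy--Schwarz split with weight $\eta_s$ is the natural one because it exactly reconstructs $D_s^1$ after integration. Beyond this, the proof is only a single explicit polynomial integral on $(0,1)$.
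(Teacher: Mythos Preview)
Your proposal is correct and follows essentially the same route as the paper: both arguments write $I_s^\eps[\phi](x)-\phi(x)$ as a zero-mean integral, apply Cauchy--Schwarz with weight $\eta_s$ so that the inner factor becomes exactly $\int_{B_1(x)}\eta_s(|x-y|)|\phi(x)-\phi(y)|^2\d y$, use the pointwise bounds (c)/(d) of \cref{lem:smoothlem} to control the remaining factor, and finish with the polynomial integral $\int_{B_1}|z|^{4}\eta_s(|z|)\d z$ (respectively $\int_{B_1}|z|^{2+2s}\eta_s(|z|)\d z$). The only cosmetic difference is ordering: the paper first applies the lemma bound $\psi_s^\eps\leq C t^\gamma\eta_s$ and then Cauchy--Schwarz, whereas you apply Cauchy--Schwarz first and then bound the resulting quotient $[\psi_s^\eps]^2/\eta_s$ with the same lemma---the two computations coincide line by line.
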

\begin{remark}
    If one is willing to assume Hölder regularity of $\phi$ and restricts the $L^2(\R^d)$-norm to a bounded set, one can get a quadratic dependence on $1-s$. 
    More precisely, if $\phi\in C^{0,s}(\R^d)$, then for any bounded open set $A\subset\R^d$ one can prove
    \begin{align*}
        \norm{I_s^\eps[\phi]-\phi}_{L^2(A)}^2
        \leq 
        \mathcal L^d(A)\left(\frac{2d}{(d+2s-2)(2-s)}[\phi]_{C^{0,s}(\R^d)}\right)^2\left(\frac{1-s}{1-\eps^{2-2s}}\right)^2.
    \end{align*}
    However, in the proof of our main results other errors of the same order as the one in \cref{prop:convolution_id} will be present and hence we will not use the improved estimate for Hölder functions.
\end{remark}
\begin{proof}[Proof of \cref{prop:convolution_id}]
We first consider the case $d\geq 2$.
Using \cref{smoothlem-mass,smoothlem-bound} in \cref{lem:smoothlem} to get for almost every $x\in \R^d$:
\begin{align*}
|I_s^\eps[\phi]&(x)-\phi(x)|
\\
&\leq \int_{B_1(x)} \psi_s^\eps(|x-y|) |\phi(x) -\phi(y)|\d y\\
&\leq \frac{2}{d+2s-2}\frac{1}{1-\eps^{2-2s}} \int_{B_1(x)} |x-y|^2\eta_{s}(|x-y|) |\phi(x) -\phi(y)|\d y\\
&\leq \frac{2}{d+2s-2}\frac{1}{1-\eps^{2-2s}}
\left(\int_{B_1} |z|^4\eta_{s}(|z|)\d z\right)^{\frac{1}{2}}\left(\int_{B_1(x)} \eta_{s}(|x-y|) |\phi(x) -\phi(y)|^2\d y\right)^{\frac{1}{2}}\\
&= \frac{2}{d+2s-2}\frac{1}{1-\eps^{2-2s}}\left(\frac{d}{4-2s}(1-s) \right)^{\frac{1}{2}}\left(\int_{B_1(x)} \eta_{s}(|x-y|) |\phi(x) -\phi(y)|^2\d y\right)^{\frac{1}{2}}.
\end{align*}
Thus, by squaring this estimate and integrating over $\R^d$ we obtain
\begin{align*}
\int_{\R^d}|I_s^\eps[\phi](x)-\phi(x)|^2 \d x \leq  \left(\frac{2}{d+2s-2}\right)^2 \frac{d}{4-2s}\frac{1-s}{\left(1-\eps^{2-2s}\right)^2} D_s^1(\phi)
\end{align*}
which proves the first claim.
In the case $d=1$, we use \cref{smoothlem-bound_d=1} in place of \cref{smoothlem-bound} which yields
\begin{align*}
    |I_s^\eps[\phi]&(x)-\phi(x)|
    \\
    &\leq 
    \frac{2}{1-\eps^{2-2s}}
    \int_{B_1(x)}\abs{x-y}^{1+s}
    \eta_s(\abs{x-y})\abs{\phi(x)-\phi(y)}\d y
    \\
    &\leq 
    \frac{2}{1-\eps^{2-2s}}
    \left(
    \int_{B_1}\abs{z}^{2+2s}\eta_s(\abs{z})\d z
    \right)^\frac{1}{2}
    \left(
    \int_{B_1(x)}
    \eta_s(\abs{x-y})\abs{\phi(x)-\phi(y)}^2\d y
    \right)^\frac{1}{2}.
\end{align*}
The first bracket can be computed as follows:
\begin{align*}
    \int_{B_1}\abs{z}^{2+2s}\eta_s(\abs{z})\d z
    =
    (1-s)
    \int_0^1 
    t\d t
    \leq
    (1-s)
\end{align*}
which completes the proof.
\end{proof}
The following proposition proves that for Hölder functions, analogous to \cref{prop:convolution_id}, the convolution $I_s^\eps$ is close to the identity in uniform way.
\begin{proposition}\label{prop:convolution_pointwise}
    If $\phi\in C^{0,s}(\R^d)$, then it holds
    \begin{align*}
        \|I_s^\eps[\phi]-\phi\|_{L^\infty(\R^d)} \leq \frac{2d}{(d+s)(2-s)}[\phi]_{C^{0,s}(\R^d)}\frac{1-s}{1-\eps^{2-2s}}
        \leq 
        2[\phi]_{C^{0,s}(\R^d)}\frac{1-s}{1-\eps^{2-2s}}.
    \end{align*}
\end{proposition}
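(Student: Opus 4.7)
The plan is to reduce the statement directly to \cref{lem:smoothlem}\cref{smoothlem-mass}, using the $C^{0,s}$-bound to control pointwise differences. The estimate is essentially a one-line consequence of the moment computation for the kernel $\psi_s^\eps$; the only tiny observation to be made is that the $\eps$-dependent factor coming out of that moment is bounded by $1/(1-\eps^{2-2s})$.

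First, by \cref{lem:smoothlem}\cref{smoothlem-mass} applied with $\alpha=0$, the kernel $\psi_s^\eps(|\cdot|)$ integrates to one on $\R^d$. Hence for every $x\in\R^d$ we may rewrite
\begin{align*}
    I_s^\eps[\phi](x)-\phi(x)
    &= \int_{\R^d}\psi_s^\eps(|x-y|)\bigl(\phi(y)-\phi(x)\bigr)\d y.
\end{align*}
Taking absolute values and using the Hölder bound $|\phi(y)-\phi(x)|\leq [\phi]_{C^{0,s}(\R^d)}|x-y|^s$ (the kernel is nonnegative and supported in $B_1(x)$ by \cref{smoothlem-support}, so only small increments matter) yields
\begin{align*}
    |I_s^\eps[\phi](x)-\phi(x)|
    \leq [\phi]_{C^{0,s}(\R^d)}\int_{\R^d}\psi_s^\eps(|z|)\,|z|^s\d z.
\end{align*}

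Next I would plug $\alpha=s$ into \cref{lem:smoothlem}\cref{smoothlem-mass} to evaluate this integral exactly:
\begin{align*}
    \int_{\R^d}\psi_s^\eps(|z|)\,|z|^s\d z
    = \frac{d}{d+s}\cdot\frac{1-s}{(1-s)+s/2}\cdot\frac{1-\eps^{2-s}}{1-\eps^{2-2s}}
    = \frac{2d}{(d+s)(2-s)}\cdot\frac{(1-s)(1-\eps^{2-s})}{1-\eps^{2-2s}},
\end{align*}
after simplifying the middle factor using $(1-s)+s/2=(2-s)/2$. Since $\eps\in[0,1)$ gives $1-\eps^{2-s}\leq 1$, this estimate combined with the previous display delivers the first inequality of the proposition. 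For the second inequality, one only needs $\frac{2d}{(d+s)(2-s)}\leq 2$, i.e.\ $d\leq (d+s)(2-s)$, which follows immediately from $(d+s)(2-s)\geq d(2-s)\geq d$ since $s<1$.

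There is no genuine obstacle here: the smoothing kernel $\psi_s^\eps$ was designed precisely so that its $s$-th moment behaves like $(1-s)$, matching the Hölder exponent of $\phi$. The only minor care is in simplifying the $\eps$-factor $(1-\eps^{2-s})/(1-\eps^{2-2s})$ to the stated form, which is handled by the trivial bound $1-\eps^{2-s}\leq 1$.
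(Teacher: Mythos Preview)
Your proof is correct and follows essentially the same route as the paper: use that $\psi_s^\eps(|\cdot|)$ has unit mass to write the difference as an integral against $\phi(y)-\phi(x)$, apply the $C^{0,s}$ bound, and then invoke \cref{lem:smoothlem}\cref{smoothlem-mass} with $\alpha=s$ to evaluate the resulting $s$-th moment. The paper's argument is slightly terser but identical in substance.
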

\begin{proof}
    We compute 
    \begin{align*}
        \abs{I_s^\eps[\phi](x)-\phi(x)} 
        \leq 
        \int_{\R^d}
        \psi_s^\eps(\abs{x-y})\abs{\phi(x)-\phi(y)}\d y
        \leq 
        [\phi]_{C^{0,s}(\R^d)}
        \int_{B_1}
        \psi_s^\eps(\abs{z})\abs{z}^s\d z
    \end{align*}
    and the statement follows
    as a consequence of \cref{smoothlem-mass} in \cref{lem:smoothlem} with $\alpha:=s$. Since the final estimate does not depend on $x\in\R^d$, we get the result for the $L^\infty(\R^d)$ norm.
\end{proof}

Now we prove the most important property of the smoothing operator $I_s^\eps$, namely that it is consistent with our energies.
That means, the Dirichlet energy of $I_s^\eps[\phi]$ can be controlled by the fractional Dirichlet energy of $\phi$ times a constant that is close to one (and equals one for $\eps=0$).
We also prove that the Lipschitz constant of $I_s^\eps[\phi]$ is bounded by the Hölder constant of $\phi$. 
Interestingly, both bounds are uniform in $\eps$ which allows us to choose $\eps=0$ in the sequel.
\begin{proposition}\label{prop:nonlocal2local}
Let $\phi\in W^{s,2}(\R^d)$. Then, for all $\eps\in(0,1)$, we have that $I_s^\eps[\phi]\in W^{1,2}(\R^d)$ and
\begin{align}\label{eq:nonlocal2local}
    D(I^\eps_s[\phi]) \leq \frac{1}{\left(1-\eps^{2-2s}\right)^2}
    D_s^1(\phi),
\end{align}
where $D^1_s$ is defined in \labelcref{eq:D1s} and $D$ is defined in \labelcref{eq:Denergy}.
Moreover, if $\phi\in C^{0,s}(\R^d)$, then $I^\eps_s[\phi]\in \Lip(\R^d)$ and
\begin{align}\label{eq:Lipmol}
    \|\nabla I_s^\eps[\phi]\|_{L^\infty(\R^d)}
    \leq 
    \frac{2d[\phi]_{C^{0,s}(\R^d)}}{1-\eps^{2-2s}}(1-\eps^{1-s})
    \leq
    \frac{2d[\phi]_{C^{0,s}(\R^d)}}{1-\eps^{2-2s}}.
\end{align}
\end{proposition}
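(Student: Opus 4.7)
The plan is to realize $I_s^\eps[\phi]=\psi_s^\eps*\phi$ as a convolution, transfer the spatial derivative onto the smoothing kernel, and then bound the resulting integral by Cauchy--Schwarz in a way adapted to the structure of $\eta_s$.

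By \cref{smoothlem-derivative} of \cref{lem:smoothlem}, $\psi_s^\eps$ has a compactly supported $L^\infty$ weak derivative (for $\eps>0$), so $\nabla I_s^\eps[\phi]=(\nabla\psi_s^\eps)*\phi\in L^2(\R^d)$ by Young's inequality and $I_s^\eps[\phi]\in W^{1,2}(\R^d)$. Writing the gradient in difference-quotient form, using that $\int\nabla_x\psi_s^\eps(|x-y|)\d y=0$ by radial antisymmetry, and substituting \cref{smoothlem-derivative}, I obtain the pointwise representation
\begin{align*}
    \nabla I_s^\eps[\phi](x)=-\frac{2}{1-\eps^{2-2s}}\int_{B_1(x)\setminus B_\eps(x)}\eta_s(|x-y|)\,(x-y)\,(\phi(y)-\phi(x))\d y.
\end{align*}

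For \labelcref{eq:nonlocal2local}, the cleanest route is a two-stage Cauchy--Schwarz in polar coordinates $y=x+r\omega$, $r\in(\eps,1)$, $\omega\in S^{d-1}$. First, on the sphere, decomposition into spherical harmonics gives the sharp bound $\bigl|\int_{S^{d-1}}\omega(\phi(x+r\omega)-\phi(x))\d\omega\bigr|^2\leq\tfrac{\omega_d}{d}\int_{S^{d-1}}(\phi(x+r\omega)-\phi(x))^2\d\omega$, with constant $\omega_d/d$ arising because $\omega\mapsto\omega_k$ is a first spherical harmonic. Second, Cauchy--Schwarz in $r$ with the natural weight $r^{d-1}\eta_s(r)$ reduces the estimate to computing the radial moment $\int_\eps^1 r^{d+1}\eta_s(r)\d r=\tfrac{d(1-\eps^{2-2s})}{2\omega_d}$ via the definition of $C_{s,d}$. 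The two cancellations telescope to the pointwise bound $|\nabla I_s^\eps[\phi](x)|^2\leq\frac{2}{1-\eps^{2-2s}}\int_{B_1(x)\setminus B_\eps(x)}\eta_s(|x-y|)(\phi(y)-\phi(x))^2\d y$; integrating over $x$ and dividing by two then yields $D(I_s^\eps[\phi])\leq(1-\eps^{2-2s})^{-1}D_s^1(\phi)$, which implies \labelcref{eq:nonlocal2local} since $(1-\eps^{2-2s})^{-1}\leq(1-\eps^{2-2s})^{-2}$.

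The Lipschitz bound \labelcref{eq:Lipmol} is more direct: from the same pointwise representation and the Hölder estimate $|\phi(y)-\phi(x)|\leq[\phi]_{C^{0,s}(\R^d)}|x-y|^s$, the estimate reduces to the explicit one-variable integral $\omega_d\int_\eps^1 \eta_s(r)\,r^{d+s}\d r=d(1-\eps^{1-s})$, which produces exactly the stated constant after taking the supremum in $x$. The main obstacle in the energy estimate is the choice of the Cauchy--Schwarz pairing: placing the vector factor $(x-y)$ on the wrong side of the split generates a moment integral $\int_\eps^1 r^{-2s}\d r$ that diverges at $\eps=0$ once $s\geq 1/2$, while a naive component-wise sphere estimate leaves an unwanted factor of $d$ in the final constant; the two-stage pairing above is what delivers a bound uniform in both $\eps\in[0,1)$ and $s\in(0,1)$ without dimension dependence.
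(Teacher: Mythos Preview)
Your proof is correct and follows the same overall strategy as the paper: differentiate the convolution kernel, use its oddness to insert the increment $\phi(x)-\phi(y)$, and then estimate by Cauchy--Schwarz. The paper organizes the Cauchy--Schwarz in a single step: it dots $\nabla I_s^\eps[\phi](x)$ with the unit vector $\xi(x)$ in its direction, splits $\eta_s(|z|)\,(z\cdot\xi)\,(\phi(x)-\phi(y))$ into $\sqrt{\eta_s}\,(z\cdot\xi)$ and $\sqrt{\eta_s}\,(\phi(x)-\phi(y))$, and evaluates $\int_{B_1}\eta_s(|z|)(z\cdot\xi)^2\d z=\tfrac12$ by rotational invariance---which is exactly the content of your Bessel/spherical-harmonic observation. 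Your two-stage polar version is equivalent but slightly sharper: because you keep the radial moment over $(\eps,1)$ rather than enlarging it to $(0,1)$ as the paper does, you obtain $D(I_s^\eps[\phi])\leq(1-\eps^{2-2s})^{-1}D_s^1(\phi)$, one power of $1-\eps^{2-2s}$ better than the stated bound \labelcref{eq:nonlocal2local}; the paper's single-step version would give this too if the moment integral were kept on the annulus. The Lipschitz estimate \labelcref{eq:Lipmol} is handled identically in both proofs.
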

\begin{remark}
    Let us remark that, for instance, the choice $\eps=\exp\left(\frac{\log(1-s)}{1-s}\right)=(1-s)^\frac{1}{1-s}$ satisfies $\eps\to 0$ exponentially fast as $s\to 1$ and furthermore
    \begin{align*}
        \frac{1}{1-\eps^{2-2s}}
        =
        \frac{1}{1-(1-s)^2}
        =
        1 + 
        \frac{(1-s)^2}{s(2-s)}.
    \end{align*}
    Thus, the corresponding factors on the right hand side of \labelcref{eq:nonlocal2local,eq:Lipmol} are quadratically close to one if $s\to 1$. However, we will even be able to choose  $\eps=0$ in some cases.
\end{remark}
\begin{proof}[Proof of \cref{prop:nonlocal2local}]
We compute the gradient of $I^\eps_s[\phi]$  using \cref{smoothlem-derivative} in \cref{lem:smoothlem}:
\begin{align*}
    \nabla I_s^\eps[\phi](x) &= \nabla \int_{\R^d} \psi_s^\eps(|x-y|) \phi(y)\d y \\
    &=\int_{\R^d} (\psi_s^\eps)'(|x-y|) \frac{x-y}{|x-y|}\phi(y)\d y\\
    &=- \frac{2}{1-\eps^{2-2s}} \int_{B_1(x)\setminus B_\eps(x)} \eta_s(|x-y|) (x-y)\phi(y)\d y \\
    &= \frac{2}{1-\eps^{2-2s}} \int_{B_1(x)\setminus B_\eps(x)} \eta_s(|x-y|) (x-y)(\phi(x)-\phi(y))\d y.
\end{align*}
In the computation above we have strongly used that $\eta_s(|z|)$ is bounded away from the origin and even (so that $\eta_s(|z|)z$ is odd, and thus integrates to zero on $B_1\setminus B_\eps$). 

Without loss of generality we can assume that $\nabla I_s^\eps[\phi](x) \neq 0$.
Also, using the Hölder inequality it is easy to see that $\abs{\nabla I_s^\eps[\phi](x)}<\infty$ if $\eps>0$.
Since we need a slightly more refined argument for upper-bounding the modulus $\abs{\nabla I_s^\eps[\phi](x)}$, let us consider $\xi(x)\in \partial B_1$ such that 
\[
 |\nabla I_s^\eps[\phi](x)|= \nabla I_s^\eps[\phi](x)\cdot \xi(x)
\]
and as a consequence we have
\begin{align}
    \label{eq:almostlipmol}
    |\nabla I_s^\eps[\phi](x)| 
    =
    \frac{2}{1-\eps^{2-2s}}
    \int_{B_1(x)\setminus B_\eps(x)} \eta_s(|x-y|) (x-y)\cdot \xi(x)(\phi(x)-\phi(y))\d y.
\end{align}
Using the Hölder inequality we obtain
\begin{align*}
    |\nabla &I_s^\eps[\phi](x)| \\
    &\leq 
    \frac{2}{1-\eps^{2-2s}}
    \left(\int_{B_1} \eta_s(|z|) (z\cdot \xi(x))^2\d z\right)^{\frac{1}{2}} \left(\int_{B_1(x)\setminus B_\eps(x)} \eta_s(|x-y|)\abs{\phi(x)-\phi(y)}^2\d y\right)^{\frac{1}{2}}\\
    &\leq
    \frac{2}{1-\eps^{2-2s}}
    \left(\int_{B_1} \eta_s(|z|) \abs{z_1}^2\d z\right)^{\frac{1}{2}} \left(\int_{B_1(x)\setminus B_\eps(x)} \eta_s(|x-y|)\abs{\phi(x)-\phi(y)}^2\d y\right)^{\frac{1}{2}}
    \\
    &=\frac{\sqrt{2}}{1-\eps^{2-2s}}\left(\int_{B_1(x)\setminus B_\eps(x)} \eta_s(|x-y|)\abs{\phi(x)-\phi(y)}^2\d y\right)^{\frac{1}{2}}.
\end{align*}
where the last equality follows from \labelcref{eq:cte}.
As a consequence, we conclude that
\begin{align*}
D(I_s^\eps[\phi])
&= 
\frac{1}{2}\int_{\R^d}|\nabla I_s^\eps[\phi](x)|^2\d x 
\leq
\frac{1}{\left(1-\eps^{2-2s}\right)^2}
\int_{\R^d} \int_{B_1(x)} \eta_s(|x-y|)\abs{\phi(x)-\phi(y)}^2\d y \d x
\\
&=
\frac{1}{\left(1-\eps^{2-2s}\right)^2}
D_s^1(\phi).
\end{align*}
To prove \labelcref{eq:Lipmol}, we come back to \labelcref{eq:almostlipmol} and use the assumption $\phi\in C^{0,s}(\R^d)$ to get
\begin{align*}
    |\nabla I_s^\eps[\phi](x)|     
    &=
    \frac{2}{1-\eps^{2-2s}}
    \int_{B_1(x)\setminus B_\eps(x)} \eta_s(|x-y|) (x-y)\cdot \xi(x)(\phi(x)-\phi(y))\d y
    \\
    &\leq 
    \frac{2[\phi]_{C^{0,s}(\R^d)}}{1-\eps^{2-2s}}
    d(1-s) \int_\eps^1 t^{d-1}t^{-d-2s} t^{1+s}\d t
    \\
    &=
    \frac{2[\phi]_{C^{0,s}(\R^d)}}{1-\eps^{2-2s}}
    d(1-\eps^{1-s}).
\end{align*}
Since this estimate does not depend on $x\in\R^d$, we get the same bound for $\norm{\nabla I_s^\eps[\phi]}_{L^\infty(\R^d)}$.
\end{proof}

\begin{remark}
    Defining the operator $I_s := I_s^0$, using that \labelcref{eq:nonlocal2local,eq:Lipmol}are uniform in $\eps$, and utilizing lower semicontinuity, the previous proposition implies that in fact
    \begin{align*}
        D(I_s[\phi]) \leq 
        D_s^1(\phi)
    \end{align*}
    and, if $\phi\in C^{0,s}(\R^d)$, then also
    \begin{align*}
        \|\nabla I_s[\phi]\|_{L^\infty(\R^d)}
        \leq 
        2d[\phi]_{C^{0,s}(\R^d)}.
    \end{align*}
    However, proving this requires some technicalities, in particular, to make sense of the gradient $\nabla I_s[\phi]$ as a principal value integral.
    Instead, we will keep $\eps$ positive for now and only send it to zero in the very end, in the proofs of \cref{thm:main,thm:less_regular_bdry}.
\end{remark}

\subsection{Tail bounds of the mollifier}

Finally we need tail bounds for gradient of the convolution operator $I_s^\eps$.
Although its kernel $\psi_s^\eps$ defined in \cref{def:kernel_psi_s_eps} has support in $[0,1]$, most of its mass is concentrated in a much smaller ball of radius $\rho$ which goes to zero as $s\to 1$.
Later we will choose $\rho=1-s$, potentially up to a logarithmic factor.
\begin{proposition}[Tail bounds]\label{prop:tail_bounds}
    Let $\phi\in C^{0,\alpha}(\R^d)$ for some $\alpha\in(0,1]$. Then, for all $s\in\left(0,1\right)$,  all $0<\eps<\rho\leq 1$ and all $x\in \R^d$, it holds
    \begin{align}
        \label{eq:grad_tail_conv_holder}
        \abs{\nabla I_s^\eps[\phi](x)-\int_{B_\rho(x)}\nabla_x\psi_s^\eps(\abs{x-y})\phi(y)\d y}
        &\leq 
        \frac{2d[\phi]_{C^{0,\alpha}(\R^d)}(1-s)}{\alpha+1-2s}
        \frac{1-\rho^{\alpha+1-2s}}{1-\eps^{2-2s}}.
    \end{align}
    In particular, for $\alpha=s$ we get
    \begin{align}
        \label{eq:grad_tail_conv_holder_s}
        \abs{\nabla I_s^\eps[\phi](x)-\int_{B_\rho(x)}\nabla_x \psi_s^\eps(\abs{x-y})\phi(y)\d y}
        &
        \leq 
        2d[\phi]_{C^{0,s}(\R^d)}
        \frac{1-\rho^{1-s}}{1-\eps^{2-2s}}.
    \end{align}
\end{proposition}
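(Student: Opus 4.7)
The plan is to rewrite the quantity inside the absolute value as a tail integral and then exploit the odd symmetry of $\nabla_x\psi_s^\eps(|x-y|)$ in the variable $x-y$, in the same spirit as in the proof of \cref{prop:nonlocal2local}. Since $\psi_s^\eps$ is supported in $[0,1]$ by \cref{smoothlem-support} of \cref{lem:smoothlem}, and since $\eps<\rho$, we have
\begin{align*}
    \nabla I_s^\eps[\phi](x) - \int_{B_\rho(x)}\nabla_x\psi_s^\eps(|x-y|)\phi(y)\d y
    = \int_{B_1(x)\setminus B_\rho(x)} \nabla_x\psi_s^\eps(|x-y|)\phi(y)\d y.
\end{align*}

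Using \cref{smoothlem-derivative} of \cref{lem:smoothlem}, we have $\nabla_x\psi_s^\eps(|x-y|)=(\psi_s^\eps)'(|x-y|)\frac{x-y}{|x-y|}$, which is an odd function of $x-y$. Since the annulus $B_1(x)\setminus B_\rho(x)$ is radially symmetric around $x$, the integral of this integrand against the constant $\phi(x)$ vanishes, so I can replace $\phi(y)$ by $\phi(y)-\phi(x)$ without changing the value. Then the Hölder assumption gives
\begin{align*}
    \left|\int_{B_1(x)\setminus B_\rho(x)} \nabla_x\psi_s^\eps(|x-y|)(\phi(y)-\phi(x))\d y\right|
    \leq [\phi]_{C^{0,\alpha}(\R^d)} \int_{B_1\setminus B_\rho} |(\psi_s^\eps)'(|z|)|\,|z|^{\alpha}\d z.
\end{align*}

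Substituting the explicit formula for $(\psi_s^\eps)'$ from \cref{smoothlem-derivative} and switching to polar coordinates reduces the remaining integral to
\begin{align*}
    \frac{2}{1-\eps^{2-2s}}\omega_d C_{s,d}\int_\rho^1 t^{\alpha-2s}\d t = \frac{2d(1-s)}{1-\eps^{2-2s}}\cdot\frac{1-\rho^{\alpha+1-2s}}{\alpha+1-2s},
\end{align*}
using $\omega_d C_{s,d}=d(1-s)$. This yields \labelcref{eq:grad_tail_conv_holder}. The specialization $\alpha=s$ gives \labelcref{eq:grad_tail_conv_holder_s} after cancelling the factor $1-s$. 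I do not anticipate any serious obstacle: the only subtlety is to be careful that the odd-symmetry cancellation applies on $B_1(x)\setminus B_\rho(x)$ rather than on $B_1(x)\setminus B_\eps(x)$, which is precisely why the hypothesis $\eps<\rho$ is used.
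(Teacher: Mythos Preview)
Your proposal is correct and follows essentially the same route as the paper: both reduce the difference to an integral over the annulus $B_1(x)\setminus B_\rho(x)$, use the odd symmetry of $\nabla_x\psi_s^\eps(|x-y|)$ (equivalently, of $\eta_s(|x-y|)(x-y)$) to insert $\phi(x)-\phi(y)$, apply the H\"older bound, and compute the same radial integral $\int_\rho^1 t^{\alpha-2s}\d t$. The only cosmetic difference is that the paper first recalls the representation of $\nabla I_s^\eps[\phi](x)$ from \cref{prop:nonlocal2local} with the cancellation already in place, whereas you perform the cancellation directly on the tail term.
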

\begin{remark}
    We remark that the right hand side in \labelcref{eq:grad_tail_conv_holder} is always non-negative since the two terms $\alpha+1-2s$ and $1-\rho^{\alpha+1-2s}$ change signs from negative to positive at $\alpha = 2s-1$.
\end{remark}
\begin{proof}
As in the proof of \cref{prop:nonlocal2local}, it holds
\begin{align*}
    \nabla I_s^\eps[\phi](x)
    =
    M(s,\eps)
    \int_{B_1(x)\setminus B_\eps(x)}
    \eta_s(\abs{x-y})(x-y)(\phi(x)-\phi(y))\d y.
\end{align*}
As a consequence of \cref{smoothlem-derivative} in  \cref{lem:smoothlem}, we obtain
\begin{align*}
    \bigg|\nabla I_s^\eps[\phi](x)
    -
    \int_{B_\rho(x)}&\nabla_x\psi_s^\eps(\abs{x-y})\phi(y)\d y\bigg|
    \\
    &=\bigg|
    \nabla I_s^\eps[\phi](x)
    -
    M(s,\eps)\int_{B_\rho(x)\setminus B_\eps(x)}\eta_s(\abs{x-y})(x-y)(\phi(x)-\phi(y))\d y\bigg|
    \\
    &=
    \bigg|M(s,\eps)
    \int_{B_1(x)\setminus B_\rho(x)}
    \eta_s(\abs{x-y})(x-y)(\phi(x)-\phi(y))\d y\bigg|\\
    &\leq
    M(s,\eps)
    [\phi]_{C^{0,\alpha}(\R^d)}
    \int_{B_1\setminus B_\rho}
    \eta_s(\abs{z})
    \abs{z}^{1+\alpha}\d z
    \\
    &=
    \frac{2d
    [\phi]_{C^{0,\alpha}(\R^d)}
    (1-s)}{\alpha+1-2s}
    \frac{1-\rho^{\alpha+1-2s}}{1-\eps^{2-2s}}.
\end{align*}
\end{proof}

\section{Nonlocal to local convergence rates}

This section is devoted to proving the main results of the paper: \cref{thm:main,thm:less_regular_bdry}.

The proof will divided into three steps: 
First, we use the nonlocal solution $u_s$ to construct a competitor (a feasible function) for the local minimization problem \labelcref{eq:local_problem}, then we note that the local solution $u$ is automatically a competitor  for the nonlocal problem \labelcref{eq:nonlocal_problem}, and finally we use the stability of the local and the nonlocal problem from \cref{prop:convexity_local,prop:convexity_nonlocal} to prove the final convergence rate.
The most challenging part is the construction of the competitor for the local problem since this requires smoothing the nonlocal solution with the operator $I_s^\eps$ and adapting its boundary values.

\subsection{Boundary modifications}
\label{sec:bdry_mod}

Let $u_s\in W^{s,2}(\R^d)$ denote the unique solution of \labelcref{eq:nonlocal_problem}.
We abbreviate the set of points in $\Omega$ with distance at most $r>0$ to the boundary by 
\begin{align*}
    \partial_r\Omega := \{x\in\Omega\st \dist(x,\Omega^c)\leq r\}.
\end{align*}
\begin{remark}\label{rem:rdomainmeas}
   Note that if $\Omega$ has a Lipschitz boundary, then the measure of $\partial_r\Omega$ is proportional to $r$ and, in particular, converges to zero at the rate $O(r)$ as $r\to0^+$. That is, there exists a constant $C_\Omega\geq0$ such that $|\partial_r \Omega|\leq C_{\Omega} r$,
\end{remark}

Ideally, we would like to use the mollified version of $u_s$ given by $I_s^\eps[u_s]$ as a feasible function for the nonlocal problem \labelcref{eq:nonlocal_problem}. However, the effect of mollification makes $I_s^\eps[u_s]$ not being equal to $g$ outside of $\Omega$. We shall construct a feasible function for the nonlocal problem \labelcref{eq:nonlocal_problem} by linearly interpolating the complementary boundary data $g$ and the mollification $I_s^\eps[u_s]$ in $\partial_r\Omega$, by defining
\begin{equation}\label{eq:boundmol}
    w_s^{r,\eps}(x)=
    \begin{dcases}
        g(x), \quad&x\in \R^d\setminus \Omega\\
        \left(1-\frac{\dist(x,\Omega^c)}{r}\right)g(x) + \frac{\dist(x,\Omega^c)}{r} I_s^\eps[u_s](x), \quad &x\in\partial_r\Omega,
        \\
        I_s^\eps[u_s](x), \quad &x\in\Omega\setminus\partial_r\Omega.
    \end{dcases}
\end{equation}

We will prove now several properties of $w_s^{r,\eps}$. First, we will show that $w_s^{r,\eps}$ and $I_s^\eps[u_s]$ are close at points $x\in \partial_r\Omega$ and, consequently, also in $L^2(\Omega)$. 
For that purpose, we will assume that $u_s \in C^{0,s}(\R^d)$ which is guaranteed by \cite[Proposition 1.1]{ros-oton2014dirichlet} in the case $g\equiv 0$ and which remains a hypothesis for $g\in C^{1,\alpha}(\R^d)$, cf. \cref{rem:hölder,rem:hoelder?}. 

\begin{lemma}\label{lem:suit}
Let $\Omega$ be a Lipschitz domain, $u_s,g\in C^{0,s}(\R^d)$ and $w_{s}^{r,s}$ be defined by \labelcref{eq:boundmol}. Then, for all $x\in\partial_r\Omega$, it holds
\begin{align*}
    \abs{I_s^\eps[u_s](x)-w_s^{r,\eps}(x)}
    &\leq
    \abs{I_s^\eps[u_s](x)-g(x)}\\
    &\leq 2
    \left([u_s]_{C^{0,s}(\R^d)} + [g]_{C^{0,s}(\R^d)}\right)\left(r^s+
    \frac{1-s}{1-\eps^{2-2s}}\right).
\end{align*}
\end{lemma}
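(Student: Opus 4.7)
The plan has two clean steps: first I would establish the first inequality by a direct algebraic manipulation, then I would bound the distance between the mollified solution and the boundary datum using a three-term split that exploits the fact that $u_s$ coincides with $g$ on $\Omega^c$.

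For the first inequality, fix $x\in\partial_r\Omega$ and denote $\lambda(x):=\dist(x,\Omega^c)/r\in[0,1]$. The definition of $w_s^{r,\eps}$ on $\partial_r\Omega$ gives
\[
I_s^\eps[u_s](x)-w_s^{r,\eps}(x)=(1-\lambda(x))\bigl(I_s^\eps[u_s](x)-g(x)\bigr),
\]
and since $1-\lambda(x)\in[0,1]$, the first inequality follows immediately.

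For the second inequality, the key trick is to introduce a near-optimal exterior point. Since $x\in\partial_r\Omega$ and $\Omega^c$ is closed (and nonempty near $\partial\Omega$ for a Lipschitz domain), I can pick $x_0\in\Omega^c$ with $|x-x_0|\le r$; on $\Omega^c$ the solution $u_s$ satisfies $u_s(x_0)=g(x_0)$. I then split
\[
I_s^\eps[u_s](x)-g(x)=\bigl(I_s^\eps[u_s](x)-u_s(x)\bigr)+\bigl(u_s(x)-u_s(x_0)\bigr)+\bigl(g(x_0)-g(x)\bigr).
\]
The first term is bounded uniformly by \cref{prop:convolution_pointwise} applied to $u_s\in C^{0,s}(\R^d)$, yielding a contribution of at most $2[u_s]_{C^{0,s}(\R^d)}\tfrac{1-s}{1-\eps^{2-2s}}$. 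The second and third terms are controlled by the Hölder seminorms of $u_s$ and $g$ respectively, each producing a contribution of at most $[u_s]_{C^{0,s}(\R^d)}\,r^s$ and $[g]_{C^{0,s}(\R^d)}\,r^s$ because $|x-x_0|\le r$. Summing the three terms and coarsening the constants gives the claimed bound.

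No real obstacle is foreseen: the delicate part is already in \cref{prop:convolution_pointwise}, which handles the quantitative closeness of $I_s^\eps$ to the identity on Hölder functions. The only care needed is the choice of $x_0$ (which may require taking an infimizing sequence if the infimum in $\dist(x,\Omega^c)$ is not attained, in which case all the Hölder estimates pass through by a limit $\delta\to 0$) and observing that the resulting bound $2[u_s]_{C^{0,s}(\R^d)}\tfrac{1-s}{1-\eps^{2-2s}}+([u_s]_{C^{0,s}(\R^d)}+[g]_{C^{0,s}(\R^d)})r^s$ is dominated termwise by the symmetric expression $2([u_s]_{C^{0,s}(\R^d)}+[g]_{C^{0,s}(\R^d)})(r^s+\tfrac{1-s}{1-\eps^{2-2s}})$ stated in the lemma.
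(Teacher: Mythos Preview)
Your proof is correct and follows essentially the same approach as the paper: both use the convex-combination form of $w_s^{r,\eps}$ for the first inequality and then a three-term split through a point on (or near) $\partial\Omega$ where $u_s=g$ for the second. The only minor variation is the order of operations: the paper first moves $I_s^\eps[u_s](x)$ to $I_s^\eps[u_s](\pi(x))$ (which requires the auxiliary observation that $I_s^\eps$ preserves the $C^{0,s}$-seminorm) and then applies \cref{prop:convolution_pointwise} at $\pi(x)$, whereas you apply \cref{prop:convolution_pointwise} at $x$ first and then move $u_s(x)$ to $u_s(x_0)$ using the H\"older regularity of $u_s$ directly; your variant is marginally more economical since it avoids the seminorm-preservation step.
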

\begin{proof}
    
    Note first that since $u_s\in C^{0,s}(\R^d)$, then, given any two points $x,\overline{x}\in \R^d$, we have that
    \begin{align}
        |I_s^\eps[u_s](x)-I_s^\eps[u_s](\overline{x})| &\leq \int_{\R^d} \psi_s^\eps (|z|) \abs{u_s(x+z)-u_s(\overline{x}+z)}\d z \nonumber\\
        &\leq  [u_s]_{C^{0,s}(\R^d)} |x-\overline{x}|^s \int_{\R^d} \psi_s^\eps (|z|) \d z\nonumber\\
        &= [u_s]_{C^{0,s}(\R^d)} |x-\overline{x}|^s. \label{eq:HolderI}
    \end{align}
  This shows that $I_s^\eps[u]\in C^{0,s}(\R^d)$ and has the same modulus of continuity as $u_s$. 
    Let now, for $x\in \Omega$, denote $\pi(x)$ a closest point to $x$ in $\partial\Omega$, so that $\dist(x,\Omega^c)=|x-\pi(x)|$. 
    Then for $x\in\partial_r\Omega$ we get
    \begin{align*}
        |w_s^{r,\eps}(x)-&I_s^\eps[u_s](x)|
        \\
        &=
        \abs{1-\frac{\dist(x,\Omega^c)}{r}}
        \abs{I_s^\eps[u_s](x)-g(x)}
        \\
        &\leq 
        \abs{I_s^\eps[u_s](x)-I_s^\eps[u_s](\pi(x))}
        +
        \abs{g(x)-g(\pi(x))}
        +
        \abs{u_s(\pi(x))-I_s^\eps[u_s](\pi(x))},
    \end{align*}
    where we have used the definition of $w_s^{r,\eps}$ given by \labelcref{eq:boundmol} and the fact that,  since $\pi(x)\in \partial\Omega$, it holds $u_s(\pi(x))=g(\pi(x))$.
    We estimate the three terms using \labelcref{eq:HolderI}, $g\in C^{0,s}(\R^d)$, and \cref{prop:convolution_pointwise}, respectively, and obtain
    \begin{align*}
        \abs{w_s^{r,\eps}(x)-I_s^\eps[u_s](x)}
        &\leq 
        \left([u_s]_{C^{0,s}(\R^d)} + [g]_{C^{0,s}(\R^d)}\right)r^s
        +
        2[u_s]_{C^{0,s}(\R^d)}
        \frac{1-s}{1-\eps^{2-2s}}
    \end{align*}  
    from which the result trivially follows.
\end{proof}
We get the following simple corollary from \cref{lem:suit}.
\begin{corollary}\label{cor:L2_error_modification}
Let the assumptions of \cref{lem:suit} hold. Then the exists a constant $C_\Omega\geq0$ such that
    \begin{align*}
        \|I_s^\eps[u_s]-w_s^{r,\eps}\|_{L^2(\Omega)}^2
        \leq& 
        C_\Omega\left([u_s]_{C^{0,s}(\R^d)}^2 + [g]_{C^{0,s}(\R^d)}^2\right)  \left( r^{1+2s} +\left(\frac{1-s}{1-\eps^{2-2s}}\right)^2 r\right).
    \end{align*}   
\end{corollary}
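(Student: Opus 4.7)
The plan is to exploit the fact that $I_s^\eps[u_s]-w_s^{r,\eps}$ vanishes outside the boundary strip $\partial_r\Omega$, reducing the $L^2(\Omega)$ integral to an integral over a set of small measure on which a pointwise bound from \cref{lem:suit} is available. Concretely, from the definition of $w_s^{r,\eps}$ in \labelcref{eq:boundmol} we have $w_s^{r,\eps} = I_s^\eps[u_s]$ on $\Omega\setminus\partial_r\Omega$, so
\begin{align*}
    \|I_s^\eps[u_s]-w_s^{r,\eps}\|_{L^2(\Omega)}^2
    =
    \int_{\partial_r\Omega}\abs{I_s^\eps[u_s](x)-w_s^{r,\eps}(x)}^2\d x.
\end{align*}

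Next, I would insert the pointwise estimate from \cref{lem:suit}, which gives
\begin{align*}
    \abs{I_s^\eps[u_s](x)-w_s^{r,\eps}(x)}^2
    \leq 4\left([u_s]_{C^{0,s}(\R^d)} + [g]_{C^{0,s}(\R^d)}\right)^2\left(r^s+\frac{1-s}{1-\eps^{2-2s}}\right)^2
\end{align*}
uniformly in $x\in\partial_r\Omega$. Applying the elementary inequality $(a+b)^2\leq 2(a^2+b^2)$ twice — once to split the $r^s$ and $(1-s)/(1-\eps^{2-2s})$ contributions, and once to split the two Hölder seminorms — reduces the right-hand side to a constant multiple of $\bigl([u_s]_{C^{0,s}(\R^d)}^2 + [g]_{C^{0,s}(\R^d)}^2\bigr)\bigl(r^{2s}+(\tfrac{1-s}{1-\eps^{2-2s}})^2\bigr)$.

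Finally, I would use \cref{rem:rdomainmeas}, which gives $|\partial_r\Omega|\leq C_\Omega r$ since $\Omega$ is Lipschitz, to multiply the pointwise bound by the measure of the integration domain. This produces the factor $r^{1+2s} + (\tfrac{1-s}{1-\eps^{2-2s}})^2 r$ and yields the claim, after absorbing all numerical factors into the constant $C_\Omega$. No step is really hard here — the only thing to be a bit careful about is to confirm that the modified function $w_s^{r,\eps}$ does equal $I_s^\eps[u_s]$ pointwise in $\Omega\setminus\partial_r\Omega$, so that no contribution from the interior of $\Omega$ needs to be estimated; this is transparent from the case split in \labelcref{eq:boundmol}.
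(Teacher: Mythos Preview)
Your proposal is correct and follows exactly the same route as the paper: restrict the integral to $\partial_r\Omega$ using the definition \labelcref{eq:boundmol}, apply the pointwise bound from \cref{lem:suit}, and multiply by $|\partial_r\Omega|\leq C_\Omega r$ from \cref{rem:rdomainmeas}. The only cosmetic difference is that you make the $(a+b)^2\leq 2(a^2+b^2)$ splitting explicit, whereas the paper leaves that step implicit.
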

\begin{proof}
Note that, by the definition of $w_s^{r,\eps}$ given by \labelcref{eq:boundmol}, we have that
\begin{align*}
     \|I_s^\eps[u_s]-w_s^{r,\eps}\|_{L^2(\Omega)}^2&= \int_{\Omega} |I_s^\eps[u_s](x)-w_s^{r,\eps}(x)|^2 \d x\\
     &= \int_{\partial_r\Omega} |I_s^\eps[u_s](x)-w_s^{r,\eps}(x)|^2 \d x\\
     & \leq  \left(2
    \left([u_s]_{C^{0,s}(\R^d)} + [g]_{C^{0,s}(\R^d)}\right)\left(r^s+
    \frac{1-s}{1-\eps^{2-2s}}\right)\right)^2 |\partial_r \Omega|,
\end{align*}
where we have used \cref{lem:suit,rem:rdomainmeas}.
\end{proof}

We will show now the local energies of $w_s^{r,\eps}$ and $I_s^\eps[u_s]$ are also close. We do it both for $g\in C^{1,\alpha}(\R^d)$ and $g\equiv 0$.

\begin{lemma}\label{lem:lastlemma}
Let $\Omega$ be a Lipschitz domain, $u_s\in C^{0,s}(\R^d)$ be the solution of \labelcref{eq:nonlocal_problem}, and $w_{s}^{r,\eps}$ be defined by \labelcref{eq:boundmol}. The following assertions hold:
\begin{enumerate}[\rm (a)]
\item\label{lem-suitgrad-item2} If $g\in C^{1,\alpha}(\R^d)\cap W^{s,2}(\R^d)$ and $\nabla g\in \dot W^{\beta,1}(\R^d)$ for some $\alpha,\beta\in(0,1]$, then for all $\rho\in(\eps,1)$ we have that
\begin{align*}
    |J(w_s^{r,\eps}&)-J(I_s^\eps[u_s])| 
    \leq 
    C\left([u_s]_{C^{0,s}(\R^d)}^2 + \|g\|_{C^{1,\alpha}(\R^d)}^2+
    \norm{g}_{C^{0,1}(\R^d)}[g]_{\dot W^{\beta,1}(\R^d)}\right) \times
    \\
    &
      \times \Bigg[ r^{2s-1}  + 
    \frac{r+\rho+ (1-\rho^{2-2s}) + \frac{1-s}{\alpha}\left(\rho^{2+\alpha-2s}-\eps^{2+\alpha-2s}\right) + (1-s)^2r^{-1}}{(1-\eps^{2-2s})^2}
    \Bigg]
    \\
    &\qquad \qquad \qquad 
    +
    C
    \left([u_s]_{C^{0,s}(\R^d)} + [g]_{C^{0,1}(\R^d)}\right)
    \norm{f}_{L^\infty(\Omega)}
    \left(
    r^{1+s}
    +
    \frac{1-s}{1-\eps^{2-2s}}r
    \right)
\end{align*}
where the constant $C>0$ depends solely on $\Omega$ and $d$.
\item\label{lem-suitgrad-item3} If $u_s\in C^{0,s}(\R^d)$ and $g\equiv 0$ on $\R^d\setminus\Omega$, then, for all $\rho\in(\eps,1)$, we have that
\begin{align*}
    |J(w_s^{r,\eps})-J(I_s^\eps[u_s])| 
    &\leq 
    C
    [u_s]_{C^{0,s}(\R^d)}^2
    \Bigg[r^{2s-1} 
    + 
    \frac{ r+\rho+ (1-\rho^{1-s})^2  + (1-s)^2r^{-1}}{(1-\eps^{2-2s})^2}
    \Bigg]
    \\
    &\qquad
    +
    C
    [u_s]_{C^{0,s}(\R^d)}
    \norm{f}_{L^\infty(\Omega)}
    \left(
    r^{1+s}
    +
    \frac{1-s}{1-\eps^{2-2s}}r
    \right)
\end{align*}
where the constant $C>0$ depends solely on $\Omega$ and $d$.
\end{enumerate}
\end{lemma}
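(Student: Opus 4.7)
The plan is to decompose
\[
J(w_s^{r,\eps}) - J(I_s^\eps[u_s]) = \bigl[D(w_s^{r,\eps}) - D(I_s^\eps[u_s])\bigr] + \int_\Omega f\bigl(I_s^\eps[u_s]-w_s^{r,\eps}\bigr)\d x,
\]
and, using $w_s^{r,\eps}=I_s^\eps[u_s]$ on $\Omega\setminus\partial_r\Omega$ and $w_s^{r,\eps}=g$ on $\R^d\setminus\Omega$, split each integral into the parts over $\partial_r\Omega$ and $\R^d\setminus\Omega$. The linear term collapses to $\partial_r\Omega$; applying the pointwise bound of \cref{lem:suit} together with $\norm{f}_{L^\infty(\Omega)}$ and the measure estimate $\abs{\partial_r\Omega}\leq C_\Omega r$ from \cref{rem:rdomainmeas} immediately produces the $\norm{f}_{L^\infty(\Omega)}(r^{1+s}+(1-s)r/(1-\eps^{2-2s}))$ line of the stated bound.

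For the Dirichlet term on $\partial_r\Omega$, differentiating the interpolation formula \labelcref{eq:boundmol} with $\phi(x):=\dist(x,\Omega^c)$ ($1$-Lipschitz) yields
\[
\nabla w_s^{r,\eps}(x)=\tfrac{\nabla\phi(x)}{r}\bigl(I_s^\eps[u_s](x)-g(x)\bigr) + \bigl(1-\tfrac{\phi(x)}{r}\bigr)\nabla g(x) + \tfrac{\phi(x)}{r}\nabla I_s^\eps[u_s](x),
\]
so $\abs{\nabla w_s^{r,\eps}}^2\lesssim \abs{I_s^\eps[u_s]-g}^2/r^2 + \abs{\nabla g}^2 + \abs{\nabla I_s^\eps[u_s]}^2$. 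Integrating over $\partial_r\Omega$ (measure $\leq C_\Omega r$), \cref{lem:suit} extracts the $r^{2s-1}+(1-s)^2 r^{-1}/(1-\eps^{2-2s})^2$ contributions from the first summand, $\norm{\nabla g}_{L^\infty}\leq\norm{g}_{C^{0,1}}$ gives the $r$ term, and \cref{prop:nonlocal2local} supplies $\norm{\nabla I_s^\eps[u_s]}_{L^\infty}\leq 2d[u_s]_{C^{0,s}}/(1-\eps^{2-2s})$ for the $r/(1-\eps^{2-2s})^2$ piece.

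The main obstacle is the exterior integral $\tfrac12\int_{\R^d\setminus\Omega}(\abs{\nabla g}^2-\abs{\nabla I_s^\eps[u_s]}^2)\d x$ on an unbounded region. I would split it at distance $\rho$ from $\Omega$: the near collar $\{0<\dist(x,\Omega)<\rho\}$ has measure $\lesssim\rho$ (\cref{rem:rdomainmeas}) and is controlled by $L^\infty$-bounds on $\nabla g$ and $\nabla I_s^\eps[u_s]$, yielding the $\rho/(1-\eps^{2-2s})^2$ term. On $V_\rho:=\{\dist(x,\Omega)\geq\rho\}$ one has $B_\rho(x)\subset\R^d\setminus\Omega$, hence $u_s\equiv g$ on $B_\rho(x)$. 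Starting from the integral representation of $\nabla I_s^\eps[u_s]$ derived in the proof of \cref{prop:nonlocal2local}, I would split the $y$-domain $B_1(x)\setminus B_\eps(x)$ into $B_\rho(x)\setminus B_\eps(x)$ and $B_1(x)\setminus B_\rho(x)$. On the inner ball a Taylor expansion of $g$ at $x$ combined with the isotropy of $\eta_s$ extracts the main term $\nabla g(x)(\rho^{2-2s}-\eps^{2-2s})/(1-\eps^{2-2s})$; comparison with $\nabla g(x)$ then produces the leading $\norm{g}_{C^{0,1}}(1-\rho^{2-2s})/(1-\eps^{2-2s})$ error, while the $C^{1,\alpha}$-Taylor remainder contributes the $(1-s)\alpha^{-1}(\rho^{2+\alpha-2s}-\eps^{2+\alpha-2s})/(1-\eps^{2-2s})$ term. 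The outer annulus $B_1(x)\setminus B_\rho(x)$ is estimated via $u_s\in C^{0,s}$ and produces $[u_s]_{C^{0,s}}(1-\rho^{1-s})/(1-\eps^{2-2s})$, which once squared is absorbed into $(1-\rho^{2-2s})/(1-\eps^{2-2s})^2$ via $(1-\rho^{1-s})^2\leq 1-\rho^{2-2s}$. Finally, on the unbounded tail $\{\dist(x,\Omega)>1\}$, where $I_s^\eps[u_s]=I_s^\eps[g]$ and thus $\nabla I_s^\eps[u_s]=I_s^\eps[\nabla g]$ (integration by parts against the compactly supported $\psi_s^\eps$), I would combine an $L^\infty$-pointwise estimate on $I_s^\eps[\nabla g]-\nabla g$ obtained from $\int\psi_s^\eps(\abs{z})\abs{z}^\alpha\d z$ via \cref{smoothlem-mass} (analogous to \cref{prop:convolution_pointwise}) with an $L^1$ mollifier estimate derived by bounding $\psi_s^\eps(t)\, t^{d+\beta}\lesssim(1-s)/(1-\eps^{2-2s})$ via \cref{smoothlem-bound}; the resulting $L^\infty\cdot L^1$ product against $\abs{\nabla g}+\abs{\nabla I_s^\eps[u_s]}$ yields the $\norm{g}_{C^{0,1}}[\nabla g]_{\dot W^{\beta,1}}$ coefficient in the statement.

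Case \cref{lem-suitgrad-item3} is then proved by the same scheme with $g\equiv 0$: the Taylor-expansion step disappears, the far tail $\{\dist>1\}$ contributes nothing since $I_s^\eps[u_s]\equiv 0$ there (as $u_s\equiv 0$ on $\R^d\setminus\Omega$ and the convolution has range~$1$), and on $V_\rho$ the outer-annulus estimate using $[u_s]_{C^{0,s}}$ directly produces the $(1-\rho^{1-s})^2/(1-\eps^{2-2s})^2$ term, which replaces the mixed $(1-\rho^{2-2s})$ and Taylor-remainder contributions.
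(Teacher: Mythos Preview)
Your proposal is correct and follows essentially the same route as the paper: split $J$ into the Dirichlet and linear parts, handle $\partial_r\Omega$ and the linear term via \cref{lem:suit} and \labelcref{eq:Lipmol}, then decompose the exterior Dirichlet integral into a near collar of width~$\rho$, a bounded middle shell (the paper uses outer radius $2$ rather than your $1$, which is immaterial) treated via the tail estimate of \cref{prop:tail_bounds} combined with a first-order Taylor expansion of $g$, and an unbounded far tail handled by the $\dot W^{\beta,1}$-based $L^1$ mollifier bound. Your phrasing around the ``$L^\infty\cdot L^1$ product'' on the far tail is slightly muddled, but the intended estimate---$L^\infty$ on $\abs{\nabla g}+\abs{I_s^\eps[\nabla g]}$ times the $L^1$ bound on the difference obtained from $\psi_s^\eps(t)\lesssim (1-s)t^{-d-\beta}$ and $\nabla g\in\dot W^{\beta,1}$---is exactly what the paper does.
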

\begin{proof}
In this proof we shall use the notation $a\lesssim b$ for $a,b\in\R$ which means that $a\leq C b$ for a constant $C$ that just depends on the domain $\Omega$ and on the dimension $d$. 

Let us prove \cref{lem-suitgrad-item2} first. For brevity we write $d(x)$ instead of $\dist(x,\Omega^c)$. 
Since both $g$, and $I_s^\eps[u_s]$, and the distance function $d$ are Lipschitz continuous, the same holds for the function $w_s^{r,\eps}$ and we get for almost every $x\in\Omega$:
\begin{equation*}
\nabla w_s^{r,\eps}(x)\hspace{-0.5mm}=\hspace{-1mm}
    \begin{dcases}
    \nabla g(x) &\hspace{-1.65mm} x\in \R^d\hspace{-0.5mm}\setminus \hspace{-0.5mm}\Omega,\\
    \left(\hspace{-0.5mm} 1-\frac{d(x)}{r}\hspace{-0.5mm}\right)\nabla g(x) + \frac{d(x)}{r} \nabla I_s^\eps[u_s](x) + \frac{\nabla d(x)}{r} \left(I_s^\eps[u_s](x)-g(x) \right)  &\hspace{-1.65mm} x\in \partial_r\Omega,
    \\
    \nabla I_s^\eps[u_s](x)  & \hspace{-1.65mm}x\in\Omega\hspace{-0.5mm}\setminus\hspace{-0.5mm}\partial_r\Omega.
    \end{dcases}
\end{equation*}
We start by estimating the difference of the Dirichlet energies of $w_s^{r,\eps}$ and $I_s^\eps[u_s]$, i.e.,
\[
|D(w_s^{r,\eps})-D(I_s^\eps[u_s])| = \left|\int_{\R^d}|\nabla w_s^{r,\eps}(x)|^2\d x -  \int_{\R^d}|\nabla I_s^\eps[u_s](x)|^2\d x \right|.
\]
First note that, for $x\in \Omega\setminus\partial_r\Omega$, we have that $\nabla w_s^{r,\eps}(x)=\nabla I_s^\eps[u_s](x) $, and thus,
\begin{align*}
|D(w_s^{r,\eps})-D(I_s^\eps[u_s])| \leq& \left|\int_{\R^d\setminus \Omega}|\nabla w_s^{r,\eps}(x)|^2\d x -  \int_{\R^d\setminus \Omega}|\nabla I_s^\eps[u_s](x)|^2\d x \right|\\
&+ \left|\int_{\partial_r \Omega}|\nabla w_s^{r,\eps}(x)|^2\d x -  \int_{\partial_r \Omega}|\nabla I_s^\eps[u_s](x)|^2\d x \right|
=G_1+G_2.
\end{align*}
Let us first estimate $G_2$. 
Using that $\abs{\nabla d}=1$ almost everywhere, we have for almost every $x\in \partial_r \Omega$ that
\[
|\nabla w_s^{r,\eps}(x)|\leq  |\nabla g(x)| +  |\nabla I_s^\eps[u_s](x)| + \frac{1}{r} |I_s^\eps[u_s](x)-g(x) |
\]
and thus
\[
|\nabla w_s^{r,\eps}(x)|^2\lesssim |\nabla g(x)|^2 +  |\nabla I_s^\eps[u_s](x)|^2 + \frac{1}{r^2} |I_s^\eps[u_s](x)-g(x) |^2.
\]
This implies that,
\begin{align*}
G_2 &\lesssim \int_{\partial_r \Omega} |\nabla g(x)|^2 \d x + \int_{\partial_r \Omega} |\nabla I_s^\eps[u_s](x)|^2 \d x + \frac{1}{r^2}\int_{\partial_r \Omega} |I_s^\eps[u_s](x)-g(x) |^2 \d x\\
&= G_2^1+G_2^2+G_2^3.
\end{align*}
We trivially bound $G_2^1 \lesssim [g]_{C^{0,1}(\R^d)}^2 r$. For $G_2^2$, we use the estimate \labelcref{eq:Lipmol} to get
\begin{align*}
G_2^2 \lesssim \|\nabla I_s^\eps[u_s]\|_{L^\infty(\R^d)}^2 r
\leq \left(\frac{2d[u_s]_{C^{0,s}(\R^d)}}{1-\eps^{2-2s}}\right)^2 r
\lesssim [u_s]_{C^{0,s}(\R^d)}^2 \frac{r}{(1-\eps^{2-2s})^2}.    
\end{align*}
To estimate $G_2^3$ we use  \cref{lem:suit} to get
\begin{align*}
    G_2^3 \lesssim \left([u_s]_{C^{0,s}(\R^d)}^2 + [g]_{C^{0,s}(\R^d)}^2\right)\left(r^{2s-1}+
     \left(\frac{1-s}{1-\eps^{2-2s}}\right)^2\frac{1}{r}\right).
\end{align*} 
With this, the estimate for $G_2$ is finished and we obtain
\begin{align}\label{eq:estim_G2}
    G_2 \lesssim  \left([u_s]_{C^{0,s}(\R^d)}^2 + [g]_{C^{0,1}(\R^d)}^2\right)
    \left[\frac{r}{(1-\eps^{2-2s})^2} + r^{2s-1} + \frac{1}{(1-\eps^{2-2s})^2}\frac{(1-s)^2}{r}\right].
\end{align}
Let us now estimate $G_1$. First, we note that, if $x\in \R^d\setminus \Omega$, then $w_s^{r,\eps}(x)=g(x)$ and thus
\[
G_1= \left|\int_{\R^d\setminus \Omega}|\nabla g(x)|^2\d x -  \int_{\R^d\setminus \Omega}|\nabla I_s^\eps[u_s](x)|^2\d x \right|
\]
In analogy to $\partial_r\Omega$, we denote by $\partial^\rho \Omega := \{x\in \R^d\setminus \Omega \st \dist(x,\Omega)<\rho \}$ the outer strip around~$\Omega$ with width $\rho>0$. 
Then,
\begin{align*}
  G_1 
  &\leq  \left|\int_{\partial^\rho \Omega}|\nabla g(x)|^2\d x\right| +\left|  \int_{\partial^\rho \Omega}|\nabla I_s^\eps[u_s](x)|^2\d x \right|
  \\
  &\qquad + \left|\int_{\partial^2\Omega\setminus \partial^\rho \Omega}|\nabla g(x)|^2\d x -  \int_{\partial^2\Omega\setminus \partial^\rho \Omega}|\nabla I_s^\eps[u_s](x)|^2\d x \right|
  \\
  &\qquad
  +
  \left|\int_{\R^d\setminus(\partial^2\Omega\cup\Omega)}|\nabla g(x)|^2\d x -  \int_{\R^d\setminus(\partial^2\Omega\cup\Omega)}|\nabla I_s^\eps[g](x)|^2\d x \right|
  \\
  &=: G_1^1+G_1^2+G_1^3+G_1^4,
\end{align*}
We proceed as in the estimates for $G_2^1$ and $G_2^2$, to get
\[
G_1^1 \lesssim [g]_{C^{0,1}(\R^d)}^2\rho, \quad \textup{and} \quad G_1^2 \lesssim [u_s]_{C^{0,s}(\R^d)}^2 \frac{\rho}{(1-\eps^{2-2s})^2}.
\]
Next, we need to estimate $G_1^3$. 
For this purpose, we will make use of the tail estimates in \cref{prop:tail_bounds}, and in particular \cref{eq:grad_tail_conv_holder_s}.

Using the identity $\abs{a}^2-\abs{b}^2=(a+b)\cdot(a-b)\leq\left(\abs{a}+\abs{b}\right)\abs{a-b}$ for $a,b\in\R^d$ as well as \labelcref{eq:Lipmol} we get
\begin{align*}
    G_1^3
    &\leq 
    \left([g]_{C^{0,1}(\R^d)} + \| \nabla I_s^\eps[u_s]\|_{L^\infty(\R^d)}\right) \int_{\partial^2\Omega\setminus \partial^\rho \Omega}|\nabla g(x)-\nabla I_s^\eps[u_s](x)|\d x
    \\
    &\leq 
    \left([g]_{C^{0,1}(\R^d)} +\frac{2d[u_s]_{C^{0,s}(\R^d)}}{1-\eps^{2-2s}}\right) \underbrace{\int_{\partial^2\Omega\setminus \partial^\rho \Omega}|\nabla g(x)-\nabla I_s^\eps[u_s](x)|\d x}_{=:I}
\end{align*}
and so it suffices to estimate the integral $I$ on the right hand side using the tail bounds.
By \labelcref{eq:grad_tail_conv_holder_s}, we have
\[
\abs{\nabla I_s^\eps[u_s](x)-\int_{B_\rho(x)}\nabla_x \psi_s^\eps(\abs{x-y})u_s(y)\d y}
        \leq 
        2d[u_s]_{C^{0,s}(\R^d)}
        \frac{1-\rho^{1-s}}{1-\eps^{2-2s}}.
\]
Using this, together with $u_s=g$ in $\Omega^c$, we obtain with the triangle inequality 
\begin{align*}
    I
    &\leq  
    \int_{\partial^2\Omega\setminus \partial^\rho \Omega}\abs{\nabla I_s^\eps[u_s](x)-\int_{B_\rho(x)}\nabla_x \psi_s^\eps(\abs{x-y})u_s(y)\d y} \d x
    \\
    &\qquad+
    \int_{\partial^2\Omega\setminus \partial^\rho\Omega}
    \abs{\nabla g(x)-\int_{B_\rho(x)}\nabla_x \psi_s^\eps(\abs{x-y})g(y)\d y}\d x
    \\
    &\lesssim
    [u_s]_{C^{0,s}(\R^d)}
    \frac{1-\rho^{1-s}}{1-\eps^{2-2s}} 
    +
    \int_{\partial^2\Omega\setminus \partial^\rho\Omega}
    \abs{\nabla g(x)-\int_{B_\rho(x)}\nabla_x \psi_s^\eps(\abs{x-y})g(y)\d y}\d x.
\end{align*}
To estimate the last integral, we use that $g\in C^{1,\alpha}(\R^d)$ to write
\begin{align*}
    g(y) = g(x) + \nabla g(x)\cdot (y-x) + R_x(y),
\end{align*}
where the remainder $R_x$ satisfies 
\begin{align*}
    \abs{R_x(y)} \lesssim [g]_{C^{1,\alpha}(\R^d)} \abs{x-y}^{1+\alpha},\qquad \forall \, x,y\in\R^d.
\end{align*}
As before, we abbreviate $M(s,\eps):=\frac{2}{1-\eps^{2-2s}}$ and compute the following partial derivatives for $i=1,\dots,d$:
\begin{align*}
    \int_{B_\rho(x)}\partial_i \psi_s^\eps(\abs{x-y})&g(y)\d y
    \\
    =&
    M(s,\eps)
    \int_{B_\rho(x)\setminus B_\eps(x)}\eta_s(\abs{x-y})(x_i-y_i)(g(x)-g(y))\d y
    \\
    =&
    M(s,\eps)
    \int_{B_\rho(x)\setminus B_\eps(x)}\eta_s(\abs{x-y})(x_i-y_i)\left(\nabla g(x)\cdot(x-y) - R_x(y))\right)\d y
    \\
    =&
    M(s,\eps)
    \sum_{j=1}^d
    \partial_j g(x)
    \int_{B_\rho\setminus B_\eps}\eta_s(\abs{z})z_i z_j \d z 
    \\
    &-
    M(s,\eps)
    \int_{B_\rho(x)\setminus B_\eps(x)}\eta_s(\abs{x-y})(x_i-y_i)R_x(y)\d y.
\end{align*}
Next, we observe that the first term can be simplified as follows:
\begin{align*}
    M(s,\eps)
    \sum_{j=1}^d
    \partial_j g(x)
    \int_{B_\rho\setminus B_\eps}\eta_s(\abs{z})z_i z_j \d z
    &=
    M(s,\eps)
    \partial_i g(x)
    (1-s)
    \int_{\eps}^\rho 
    t^{1-2s}\d t
    \\
    &=
    \frac{\rho^{2-2s}-\eps^{2-2s}}{1-\eps^{2-2s}}
    \partial_i g(x)
    \\
    &=
    \partial_i g(x)
    +
    \left(\frac{\rho^{2-2s}-\eps^{2-2s}}{1-\eps^{2-2s}}-1\right)\partial_i g(x)
    \\
    &=
    \partial_i g(x)
    -
    \frac{1-\rho^{2-2s}}{1-\eps^{2-2s}}
    \partial_i g(x).
\end{align*}
The term containing the Taylor remainder $R_x(y)$ can be estimated as follows
\begin{align*}
    &\phantom{{}={}}
    \bigg|M(s,\eps)
    \int_{B_\rho(x)\setminus B_\eps(x)}\eta_s(\abs{x-y})(x_i-y_i)R_x(y)\d y\bigg|\\
    &\lesssim [g]_{C^{1,\alpha}(\R^d)}
    \frac{1-s}{1-\eps^{2-2s}}
    \int_\eps^\rho t^{1+\alpha-2s}\d t
    \\
    &\leq [g]_{C^{1,\alpha}(\R^d)}
    \frac{1-s}{1-\eps^{2-2s}}
    \frac{\rho^{2+\alpha-2s}-\eps^{2+\alpha-2s}}{\alpha}.
\end{align*}
As a consequence of these computations we obtain
\begin{align*}
    &\phantom{{}={}}\bigg|\partial_i g(x)-\int_{B_\rho(x)}\partial_i \psi_s^\eps(\abs{x-y})g(y)\d y\bigg|\\
    &
    \lesssim 
    \frac{1}{1-\eps^{2-2s}}
    \left[
    (1-\rho^{2-2s})
    \abs{\partial_i g(x)}
    +
    [g]_{C^{1,\alpha}(\R^d)}\frac{1-s}{\alpha}
    \left(\rho^{2+\alpha-2s}-\eps^{2+\alpha-2s}\right)
    \right]
\end{align*}
and hence
\begin{align*}
    I
    &\lesssim
    [u_s]_{C^{0,s}(\R^d)}
    \frac{1-\rho^{1-s}}{1-\eps^{2-2s}}
    +
    \frac{[g]_{C^{0,1}(\R^d)}(1-\rho^{2-2s})
    +
    [g]_{C^{1,\alpha}(\R^d)}\frac{1-s}{\alpha}
    \left(\rho^{2+\alpha-2s}-\eps^{2+\alpha-2s}\right)}{1-\eps^{2-2s}}
    \\
    &\lesssim
    \left([u_s]_{C^{0,s}(\R^d)}+
    \|g\|_{C^{1,\alpha}(\R^d)}\right)
    \frac{(1-\rho^{2-2s}) + \frac{1-s}{\alpha}\left(\rho^{2+\alpha-2s}-\eps^{2+\alpha-2s}\right)}{1-\eps^{2-2s}},
\end{align*}
where we used that $1-\rho^{1-s}\leq 1-\rho^{2-2s}$ for $0\leq\rho\leq 1$.
Plugging this estimate in the estimate for $G_1^3$ we get
\begin{align*}
    G_1^3 
    &\lesssim
    \left([u_s]_{C^{0,s}(\R^d)}^2+ \|g\|_{C^{1,\alpha}(\R^d)}^2\right) 
    \frac{(1-\rho^{2-2s}) + \frac{1-s}{\alpha}\left(\rho^{2+\alpha-2s}-\eps^{2+\alpha-2s}\right)}{(1-\eps^{2-2s})^2}.
\end{align*}
Finally, we estimate $G_1^4$, using that $\nabla$ and $I_s^\eps$ commute:
\begin{align*}
G_1^4&=\left|\int_{\R^d\setminus \Omega^2}|\nabla g(x)|^2\d x -  \int_{\R^d\setminus \Omega^2}|I_s^\eps[\nabla g](x)|^2\d x \right|\\
&\leq 2 \|g\|_{C^{0,1}(\R^d)} \int_{\R^d\setminus \Omega^2}|\nabla g(x) - I_s^\eps[\nabla g](x)|\d x\\
&\leq 2 \|g\|_{C^{0,1}(\R^d)} \|\nabla g - I_s^\eps[\nabla g]\|_{L^1(\R^d)}
\end{align*}
It remains to bound the last term, for which we will use that $\nabla g\in \dot W^{\beta,1}(\R^d)$ for some $\beta>0$.
Together with \cref{smoothlem-bound,smoothlem-bound_d=1} in \cref{lem:smoothlem} this allows us to estimate
\begin{align*}
    \norm{\nabla g - I_s^\eps[\nabla g]}_{L^1(\R^d)}
    &\leq 
    \int_{\R^d}
    \int_{B_1(x)}
    \psi_s^\eps(\abs{x-y})\abs{\nabla g(x)-\nabla g(y)}\d y\d x
    \\
    &\leq 
    C(1-s)
    \int_{\R^d}
    \int_{B_1(x)}
    \frac{\abs{\nabla g(x)-\nabla g(y)}}{\abs{x-y}^{d+2(s-1)}}\d y\d x
    \\
    \\
    &=
    C(1-s)
    \int_{\R^d}
    \int_{B_1(x)}
    \frac{\abs{\nabla g(x)-\nabla g(y)}}{\abs{x-y}^{d+\beta}}\underbrace{\abs{x-y}^{\beta-2(s-1)}}_{\leq 1}\d y\d x
    \\
    &\leq 
    C(1-s)[g]_{\dot W^{\beta,1}(\R^d)},
\end{align*}
where the constant $C$ just depends on the dimension $d$.
Hence, we have
\begin{align*}
    G_1^4 \leq 2C\norm{g}_{C^{0,1}(\R^d)}[g]_{\dot W^{\beta,1}(\R^d)}(1-s).
\end{align*}
In total, we arrive at the following estimate for $G_1$:
\begin{align}\label{eq:estim_G1}
\begin{split}
    G_1 
    &\lesssim
    \left([u_s]^2_{C^{0,s}(\R^d)}+\|g\|_{C^{1,\alpha}(\R^d)}^2+
    \norm{g}_{C^{0,1}(\R^d)}[g]_{\dot W^{\beta,1}(\R^d)}\right)
    \times
    \\
    &\qquad\qquad
    \times\left[
    \rho +
    \frac{\rho + (1-\rho^{2-2s}) + \frac{1-s}{\alpha}\left(\rho^{2+\alpha-2s}-\eps^{2+\alpha-2s}\right)}{(1-\eps^{2-2s})^2}
    +1-s
    \right].
\end{split}
\end{align}
Summing \labelcref{eq:estim_G1,eq:estim_G2} and keeping only the dominating terms, we obtain the full estimate of the difference of the Dirichlet energies:
\begin{align*}
    |D(w_s^{r,\eps}) &- D(I_s^\eps[u_s])|
    \lesssim
    \left([u_s]_{C^{0,s}(\R^d)}^2 + \|g\|_{C^{1,\alpha}(\R^d)}^2+
    \norm{g}_{C^{0,1}(\R^d)}[g]_{\dot W^{\beta,1}(\R^d)}\right) \times
    \\
    &\qquad
      \times \Bigg[ r^{2s-1}  + 
    \frac{(r+\rho)+ (1-\rho^{2-2s}) + \frac{1-s}{\alpha}\left(\rho^{2+\alpha-2s}-\eps^{2+\alpha-2s}\right) + (1-s)^2r^{-1}}{(1-\eps^{2-2s})^2}
    \Bigg].
\end{align*}
We prove now the analogue estimate when $g\equiv 0$ (i.e. \cref{lem-suitgrad-item3}). We can improve the estimate of $G_1^3$ as follows:
\begin{align*}
    G_1^3 
    = \int_{\partial^1\Omega\setminus\partial_\rho\Omega}\abs{\nabla I_s^\eps[u_s]}^2\d x
    \lesssim [u_s]_{C^{0,s}(\R^d)}
    \left(\frac{1-\rho^{1-s}}{1-\eps^{2-2s}}\right)^2,
\end{align*}
where we used \labelcref{eq:grad_tail_conv_holder_s} together with the fact that
\begin{align*}
    \int_{B_\rho(x)}\nabla_x \psi_s^\eps(\abs{x-y})u_s(y)\d y
    =
    \int_{B_\rho(x)}\nabla_x \psi_s^\eps(\abs{x-y})g(y)\d y
    =0
\end{align*}
for all $x\in\partial^1\Omega\setminus\partial^\rho\Omega$.
In this case we get the better estimate
\begin{align*}
    \abs{D(w_s^{r,\eps}) - D(I_s^\eps[u_s])} 
    &\lesssim
    [u_s]_{C^{0,s}(\R^d)}^2
    \Bigg[r^{2s-1}  
    +
    \frac{(r+\rho)+(1-\rho^{1-s})^2 + (1-s)^2r^{-1}}{(1-\eps^{2-2s})^2}
    \Bigg].
\end{align*}
Finally, it remains to bound the difference of the linear terms in the energy $J$, taking into account that
\begin{align*}
    \abs{J(w_s^{r,\eps}) - J(I_s^\eps[u_s])}
    \leq 
    \abs{D(w_s^{r,\eps}) - D(I_s^\eps[u_s])}
    +
    \int_\Omega 
    \abs{f(x)}
    \abs{w_s^{r,\eps}(x) - I_s^\eps[u_s](x)}
    \d x.
\end{align*}
Using \cref{lem:suit}, the second term can be estimated as follows:
\begin{align*}
    \int_\Omega 
    \abs{f(x)}&
    \abs{w_s^{r,\eps}(x) - I_s^\eps[u_s](x)}
    \d x\\
    &\leq 
    \norm{f}_{L^\infty(\Omega)}
    \int_{\partial_r\Omega}
    \abs{w_s^{r,\eps}(x) - I_s^\eps[u_s](x)}
    \d x
    \\
    &\lesssim 
    \norm{f}_{L^\infty(\Omega)}
    \left([u_s]_{C^{0,s}(\R^d)} + [g]_{C^{0,s}(\R^d)}\right)\left(r^{1+s}+
    \frac{1-s}{1-\eps^{2-2s}}r\right).
\end{align*}
\end{proof}

\subsection{Proof of the main results}
\label{sec:proof_main}

\begin{proof}[Proofs of \cref{thm:main,thm:less_regular_bdry}]

Thanks to \cite[Proposition 1.1]{ros-oton2014dirichlet} we have $u_s \in C^{0,s}(\R^d)\cap W^{s,2}(\R^d)$ in the case $g\equiv 0$.
In the setting of \cref{thm:less_regular_bdry} we pose this as an assumption.
\vspace{2mm}

\textbf{Step 1:} Let us estimate first the difference of $u$ and $u_s$ in the $W^{s,2}(\R^d)$-seminorm. We consider the function $I_s^\eps[u_s]$ which by \cref{prop:nonlocal2local} lies in $W^{1,2}(\R^d)$ and we have control of its Dirichlet energy.
However, $I_s^\eps[u_s]$ is not feasible for the nonlocal problem \labelcref{eq:nonlocal_problem} since the mollification through $I_s^\eps$ leads to $I_s^\eps[u_s]\neq g$ on $\R^d\setminus\Omega$.
Therefore, we consider the feasible function $w_s^{r,\eps} \in W^{1,2}(\R^d)$ given by \labelcref{eq:boundmol}. 

In the following estimate we use, in order, the nonlocal stability result in \cref{prop:convexity_nonlocal}, the local to nonlocal upper consistency of the energies in \cref{prop:consistency}, as well as feasibility of $w_s^{r,\eps}$ for the local minimization problem (that is minimized by $u$):
\begin{align*}
    \frac{1}{2} [u-u_s]_{W^{s,2}(\R^d)}^2
    &\leq J_s(u)-J_s(u_s)
    \\
    &\leq J(u) - J_s(u_s) + \frac{2d}{s} \|u\|_{L^2(\R^d)}^2 (1-s)
    +\norm{u}_{L^\infty(\Omega)}\norm{f-f_s}_{L^1(\Omega)}
    \\
    &\leq
    J(w^{r,\eps}_s) - J_s(u_s) + \frac{2d}{s} \|u\|_{L^2(\R^d)}^2 (1-s)
    +\norm{u}_{L^\infty(\Omega)}\norm{f-f_s}_{L^1(\Omega)}.
\end{align*}
The last two terms in the last line are already of the required form for our result. Let us estimate the first one. First we write
\begin{equation}\label{eq:E1E2}
\begin{split}
   J(w^{r,\eps}_s) - J_s(u_s)
   &= 
   \big(J(I_s^\eps[u_s]) - J_s(u_s)\big) + \big(J(w^{r,\eps}_s))-J(I_s^\eps[u_s])\big)\\
   &=: E_1(s,\eps) + E_2(s,\eps,r,\rho).
   \end{split}
\end{equation}
To bound $E_1$ we use the definition of $J_s$, the positivity of the non-singular part of $J_s$ given by $D^2_s$, the adaptability of $I_s^\eps$ with respect $J$ and $J_s$ given in \cref{prop:nonlocal2local}, and the error estimate of the mollifier in \cref{prop:convolution_pointwise}  to get
\begin{align*}
    E_1(s,\eps) 
    &= 
    \left(D(I_s^\eps[u_s])- \int_{\Omega} f(x)I_s^\eps[u_s](x) \d x\right)- \left(D^1_s[u_s] + D^2_s[u_s]-\int_{\Omega} f_s(x)u_s(x) \d x\right)
    \\
    &\leq \left(D(I_s^\eps[u_s])- D^1_s[u_s] \right) + \left(\int_{\Omega} f_s(x)u_s(x) \d x - \int_{\Omega} f(x)I_s^\eps[u_s](x) \d x \right)
    \\
    &\leq \left(\frac{1}{(1-\eps^{2-2s})^2}-1\right)D^1_s[u_s] 
    \\
    &\qquad+ \left(\int_{\Omega} (f_s(x)-f(x))u_s(x) \d x + \int_{\Omega} f(x)(u_s(x)-I_s^\eps[u_s](x)) \d x \right)
    \\
    &\leq
    \left(\frac{1}{(1-\eps^{2-2s})^2}-1\right)D^1_s[u_s] 
    \\
    &\qquad+ \|u_s\|_{L^\infty(\R^d)}\|f_s-f\|_{L^1(\Omega)} + \|f\|_{L^1(\Omega)}\|u_s-I_s^\eps[u_s]\|_{L^\infty(\R^d)}
    \\
    &\leq \left(\frac{1}{(1-\eps^{2-2s})^2}-1\right)D^1_s[u_s] 
    \\
    &\qquad+ \|u_s\|_{L^\infty(\R^d)}\|f_s-f\|_{L^1(\Omega)} + \|f\|_{L^1(\Omega)} 2\frac{1-s}{1-\eps^{2-2s}}[u_s]_{C^{0,s}(\R^d)}.
\end{align*}
At this point, we can take limits as $\eps \to 0$ to get
\begin{align*}
    \frac{1}{2} [u-u_s]_{W^{s,2}(\R^d)}^2
    &\leq
    \liminf_{\eps\to0} E_1(s,\eps)+ \liminf_{\eps\to0} E_2(s,\eps,r,\rho) + \frac{2d}{s} \|u\|_{L^2(\R^d)}^2 (1-s)
    \\
    &\qquad
    +\norm{u}_{L^\infty(\Omega)}\norm{f-f_s}_{L^1(\Omega)}
    \\
    &\lesssim    \left(\|u\|_{L^\infty(\R^d)}+\|u_s\|_{L^\infty(\R^d)}\right)\|f_s-f\|_{L^1(\Omega)} + (1-s)\|f\|_{L^1(\Omega)} [u_s]_{C^{0,s}(\R^d)}\\
    &\qquad
    + \liminf_{\eps\to0}E_2(s,\eps,r,\rho).
\end{align*}
The bound for $E_2$ is precisely the one given in \cref{lem:lastlemma} and is a continuous function of $0<\eps<1$. 
Since the above estimate already has an error term of order $1-s$, we will optimize the parameters $r$ and $\rho$ to match this error. 
Let us do it first in the case when $g\equiv0$. 
For clarity of the presentation we avoid all the dependence of the norms of $g$, $u_s$ and $f$ and also the dimensional and domain constants. In this way, we get
\begin{align*}
    \liminf_{\eps\to0}E_2(s,\eps,r,\rho)
    &\lesssim 
    r^{2s-1} 
    + 
    r+\rho+ (1-\rho^{1-s})^2  + (1-s)^2r^{-1}+
    r^{1+s}
    +
    (1-s)r\\
    &\lesssim \rho+ (1-\rho^{1-s})^2  + r^{2s-1} +(1-s)^2r^{-1}+
    (1-s),
\end{align*}
where we only kept the dominating terms in $r$ for $r\in(0,1)$. Now let us choose $r=(1-s)^{\frac{1}{s}}$ to get
\[
r^{2s-1}=(1-s)^2r^{-1}= (1-s)(1-s)^{1-\frac{1}{s}}\leq e(1-s),
\]
and $\rho=1-s$ to get
\[
(1-\rho^{1-s})^2=(1-(1-s)^{1-s})^2  \leq (1-s).
\]
This concludes the proof when $g\equiv0$. We do it now in the case $g\in C^{1,\alpha}(\R^d)$ for some $\alpha>0$. This time we have, using again the choice $r=(1-s)^{\frac{1}{s}}$, and the fact that $\rho\leq1$, the following bound
\[
E_2(s,0,r,\rho)\lesssim\rho+(1-\rho^{2-2s})+\left(1+\frac{1}{\alpha}\right)(1-s).
\]
For the choice $\rho=(1-s)\abs{\log\left(\frac{1-s}{2}\right)}$ one can easily show that $1-\rho^{2-2s}\leq2\rho$, which concludes the proof of the estimate of $[u-u_s]_{W^{s,2}(\R^d)}$.

\vspace{2mm}
\textbf{Step 2:} Let us bound now the difference of $u$ and $u_s$ in $L^2(\Omega)$. First, by triangle inequality, 
\begin{align}\label{eq:L2error}
    \norm{u-u_s}_{L^{2}(\Omega)}& \leq \norm{u-w_s^{r,\eps}}_{L^{2}(\Omega)}+ \norm{w_s^{r,\eps}-I_s^\eps[u_s]}_{L^{2}(\Omega)}+ \norm{I_s^\eps[u_s]-u_s}_{L^{2}(\Omega)}.
\end{align}
The last two terms can be bounded by \cref{cor:L2_error_modification,prop:convolution_pointwise} as follows
\begin{align*}
    \norm{w_s^{r,\eps}-I_s^\eps[u_s]}_{L^{2}(\Omega)}^2&\leq C_\Omega\left([u_s]_{C^{0,s}(\R^d)}^2 + [g]_{C^{0,s}(\R^d)}^2\right)  \left( r^{1+2s} +\left(\frac{1-s}{1-\eps^{2-2s}}\right)^2 r\right), \\
    \norm{I_s^\eps[u_s]-u_s}_{L^{2}(\Omega)}^2&\leq C_{\Omega}[u_s]_{C^{0,s}(\R^d)}^2\left(\frac{1-s}{1-\eps^{2-2s}}\right)^2 .
\end{align*}
Let us bound now the first term on the right hand side of \labelcref{eq:L2error}. By Poincaré's inequality and \cref{prop:convexity_local}, we get
\begin{align*}
    \frac{1}{C_\Omega^2}\norm{u-w_s^{r,\eps}}_{L^{2}(\Omega)}^2 \leq   \norm{\nabla u-\nabla w_s^{r,\eps}}_{L^{2}(\Omega)}^2=J(w_s^{r,\eps})- J(u).
\end{align*}
Thus, we can use the local to nonlocal upper consistency of the energies in \cref{prop:consistency}, together with the fact that $u$ is feasible for the nonlocal minimization problem (that is minimized by $u_s$), to get
\begin{align*}
    &\phantom{{}={}}
     \frac{1}{C_\Omega^2}\|u-w_s^{r,\eps}\|_{L^{2}(\Omega)}^2 
     \\
     &\leq  J(w_s^{r,\eps}) - J(I_s^\eps[u_s])+ J(I_s^\eps[u_s])- J(u)\\
     &\leq E_2(s,\eps,r,\rho) + J(I_s^\eps[u_s])- J_s(u) + \frac{2d}{s} \|u\|_{L^2(\R^d)}^2 (1-s)
    +\norm{u}_{L^\infty(\Omega)}\norm{f-f_s}_{L^1(\Omega)}\\
    &\leq E_2(s,\eps,r,\rho) + J(I_s^\eps[u_s])- J_s(u_s) + \frac{2d}{s} \|u\|_{L^2(\R^d)}^2 (1-s)
    +\norm{u}_{L^\infty(\Omega)}\norm{f-f_s}_{L^1(\Omega)}\\
    &=E_2(s,\eps,r,\rho) +E_1(s,\eps) + \frac{2d}{s} \|u\|_{L^2(\R^d)}^2 (1-s)
    +\norm{u}_{L^\infty(\Omega)}\norm{f-f_s}_{L^1(\Omega)},
\end{align*}
where $E_1$ and $E_2$ are the ones defined in \labelcref{eq:E1E2}. From here, we can argue as in Step 1 to obtain the same orders of convergence obtained there.

\end{proof}

\begin{proof}[Proof of \cref{cor:main_regular_bdry}]
    Define $\tilde u_s := u_s - g$ and $\tilde u := u - g$ which satisfy $\tilde u_s, u \equiv 0$ in $\R^d\setminus\Omega$ and are weak solutions of
    \begin{align*}
        (-\Delta)^s\tilde u_s = \tilde f_s\quad\text{in }\Omega, \quad \textup{and} \quad -\Delta\tilde u = \tilde f\quad\text{in }\Omega,
    \end{align*}
    where $\tilde f_s := f_s - (-\Delta)^s g$ and $\tilde f := f + \Delta g$.
    Since $g\in C^{2,\alpha}(\R^d)$ for $\alpha>0$ we have that $\abs{-(-\Delta)^s g - \Delta g} \lesssim \alpha^{-1}(1-s)$ in $\Omega$ by \cref{lem:pointwise_consistency} in \cref{sec:pointwise_consistency} and, as a consequence,
    \begin{align*}
        \norm{\tilde f_s - \tilde f}_{L^p(\Omega)} \leq \norm{f_s - f}_{L^p(\Omega)} + \norm{-(-\Delta)^s g - \Delta g}_{L^p(\Omega)}
        \lesssim
        \norm{f_s - f}_{L^p(\Omega)}
        +
        \alpha^{-1}(1-s).
    \end{align*}
    Hence, applying \cref{thm:main} to $\tilde u_s$ and $\tilde u$ lets us conclude.
\end{proof}


\section*{Acknowledgement} FdT was supported by the Spanish Government through RYC2020-029589-I, PID2021-127105NB-I00 and CEX2019-000904-S funded by the MICIN/AEI.
The idea for this project originated from first discussions while the authors were in residence at Institut Mittag-Leffler in Djursholm, Sweden during the semester on \textit{Geometric Aspects of Nonlinear Partial Differential Equations} in 2022, supported by the Swedish Research Council under grant no. 2016-06596.
The authors would also like to thank Erik Lindgren for discussion on the subject of this paper.

\printbibliography[heading=bibintoc] 

\begin{appendix}    
    \section{The normalization constant}
    \label{sec:normalization}

    The choice of $C_{s,d}$ is the natural one for the computations in this paper and leads to pretty clean estimates. 
    However, if one wants to consider the classical constant arising from the Fourier symbol of the fractional Laplacian, i.e.
    \[
    \tilde{C}_{s,d}=\frac{4^{s-1} \Gamma\left(s+\frac{d}{2}\right)}{\pi^{\frac{d}{2}}\Gamma(2-s)}s(1-s),
    \]
    it is standard to check, using the identity $\omega_d=(2\pi^{d/2})/\Gamma(d/2)$ and the fundamental property of the Gamma-function, that
    \begin{align*}
        \frac{C_{s,d}}{\tilde{C}_{s,d}}= \frac{\frac{d }{\omega_d}(1-s)}{\frac{4^{s-1} \Gamma\left(s+\frac{d}{2}\right)}{\pi^{\frac{d}{2}}\Gamma(2-s)}s(1-s)}
        =
        \frac{\Gamma\left(1+\frac{d}{2}\right)}{\Gamma\left(s+\frac{d}{2}\right)}
        \frac{\Gamma(2-s)}{4^{s-1}}.
    \end{align*}
    Combining this with convexity of the Gamma-function and Gautschi's inequality one can easily show that 
    \begin{align*}
        \abs{1-\frac{\tilde C_{s,d}}{C_{s,d}}} \leq C(1-s)
    \end{align*}
    for a universal constant $C>0$.
    Hence, if one is interested in solutions to the fractional Laplace equation with the classical normalization, one can simply absorb the quotient of the two constants into the right hand side $f_s$ which leads to a right hand side $\tilde f_s$ with 
    \[
    \norm{f_s-\tilde f_s}_{L^1(\Omega)}=\abs{1-\frac{\tilde C_{s,d}}{C_{s,d}}}\norm{f_s}_{L^1(\Omega)}\leq C(1-s)\norm{f_s}_{L^1(\Omega)}.\]
    As we saw in our main result, the order of convergence is hence not affected by the choice of $C_{s,d}$ instead of $\tilde C_{s,d}$.
    
    \section{Pointwise consistency for regular functions}
    \label{sec:pointwise_consistency}
    
    Our arguments for proving \cref{thm:main} are purely variational, however, for \cref{cor:main_regular_bdry} we needed the following pointwise consistency statement for the fractional Laplacian of a sufficiently smooth function. 
    \begin{lemma}\label{lem:pointwise_consistency}
        There exists a constant $C>0$ such that for any $g\in C^{2,\alpha}(\R^d)$ with $\alpha>0$, any $x\in\R^d$, and any $s\in(0,1)$ it holds that
        \begin{align*}
           |-\Delta g(x)- (-\Delta)^s g(x)| = C\left([g]_{C^{2,\alpha}(\R^d)}+\|g\|_{L^\infty(\R^d)}\right)\frac{1-s}{s}.
        \end{align*}
    \end{lemma}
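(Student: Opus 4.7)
The plan is to split the principal-value integral for $(-\Delta)^s g(x)$ into a near-field piece on $B_1(x)$ and a far-field piece on $\R^d\setminus B_1(x)$. The far-field piece will be controlled by $\|g\|_{L^\infty(\R^d)}$, while the near-field piece will be handled via a second-order Taylor expansion of $g$ around $x$. The linear term of the expansion vanishes by antisymmetry (as a principal value) and the quadratic term will be shown to reproduce exactly $-\Delta g(x)$, thanks to the particular choice of normalization constant $C_{s,d}=\frac{d(1-s)}{\omega_d}$. Only the Taylor remainder and the far-field contribute errors.

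Concretely, for the far-field, using $|g(x)-g(y)|\leq 2\|g\|_{L^\infty(\R^d)}$ and passing to polar coordinates,
\begin{align*}
    \bigg|4C_{s,d}\int_{\R^d\setminus B_1(x)}\frac{g(x)-g(y)}{|x-y|^{d+2s}}\,dy\bigg|
    \leq 8C_{s,d}\|g\|_{L^\infty(\R^d)}\omega_d\int_1^\infty r^{-1-2s}\,dr=\frac{4d(1-s)}{s}\|g\|_{L^\infty(\R^d)}.
\end{align*}
For the near-field, write $g(y)=g(x)+\nabla g(x)\cdot(y-x)+\tfrac12(y-x)^\top D^2g(x)(y-x)+R_x(y)$ with $|R_x(y)|\lesssim [g]_{C^{2,\alpha}(\R^d)}|x-y|^{2+\alpha}$. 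The linear contribution vanishes in the principal-value sense. For the quadratic contribution, rotational symmetry gives $\int_{B_1}\frac{z_iz_j}{|z|^{d+2s}}\,dz=\frac{\delta_{ij}}{d}\int_{B_1}|z|^{2-d-2s}\,dz=\frac{\delta_{ij}\omega_d}{2d(1-s)}$, so
\begin{align*}
    -4C_{s,d}\operatorname{P.V.}\!\!\int_{B_1(x)}\!\!\frac{\tfrac12(y-x)^\top D^2g(x)(y-x)}{|x-y|^{d+2s}}\,dy
    =-\Delta g(x)\cdot\frac{2C_{s,d}}{d}\cdot\frac{\omega_d}{2(1-s)}=-\Delta g(x),
\end{align*}
which is the key identity and the reason for the $C_{s,d}$ normalization.

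The remainder contributes
\begin{align*}
    4C_{s,d}[g]_{C^{2,\alpha}(\R^d)}\int_{B_1}|z|^{2+\alpha-d-2s}\,dz=\frac{4d(1-s)[g]_{C^{2,\alpha}(\R^d)}}{2+\alpha-2s}\leq\frac{4d(1-s)[g]_{C^{2,\alpha}(\R^d)}}{\alpha},
\end{align*}
since $2+\alpha-2s\geq\alpha$ for $s\in(0,1)$. Summing the far-field bound, the exact quadratic identity, and the remainder estimate yields
\begin{align*}
    \bigl|-\Delta g(x)-(-\Delta)^s g(x)\bigr|\lesssim\bigl([g]_{C^{2,\alpha}(\R^d)}+\|g\|_{L^\infty(\R^d)}\bigr)\frac{1-s}{s},
\end{align*}
with an implicit constant depending on $d$ and $\alpha^{-1}$ (consistent with the $\alpha^{-1}(1-s)$ estimate used in the proof of \cref{cor:main_regular_bdry}).

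The only subtle point is the quadratic-term computation: one must verify that the choice of $C_{s,d}$ exactly cancels the $(1-s)^{-1}$ coming from $\int_0^1 r^{1-2s}\,dr$ so that the leading term of the Taylor expansion produces $-\Delta g(x)$ identically (and not $-\Delta g(x)$ up to a factor tending to $1$). Once this is observed, the bound is routine. A technical remark: the uniform Taylor expansion is valid on $B_1(x)$ because $g\in C^{2,\alpha}(\R^d)$ globally, so the constants in the remainder estimate do not depend on $x$.
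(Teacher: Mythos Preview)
Your proof is correct and follows essentially the same approach as the paper: the same near-field/far-field split at radius $1$, the same Taylor expansion with the linear term killed by antisymmetry, the same exact identification of the quadratic term with $-\Delta g(x)$ via the normalization $C_{s,d}=\frac{d(1-s)}{\omega_d}$, and the same remainder and far-field bounds. Your explicit observation that the resulting constant carries an $\alpha^{-1}$ dependence is consistent with how the lemma is actually used in the proof of \cref{cor:main_regular_bdry}.
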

    \begin{proof}
    By definition,
    \begin{align*}
         (-\Delta)^s g(x)=&\ 4 C_{d,s} \operatorname{P.V.} \int_{|z|>0} \left(g(x) - g(x+z)\right) \frac{\d z}{|z|^{d+2s}}\\
         =&\ 4C_{d,s}\operatorname{P.V.} \int_{0<|z|<1} \left(g(x) - g(x+z)\right) \frac{\d z}{|z|^{d+2s}}\\
         &+4C_{d,s}\int_{|z|>1} \left(g(x) - g(x+z)\right) \frac{\d z}{|z|^{d+2s}}\\
         =&\ I_1(x)+I_2(x).
    \end{align*}
Clearly, 
\begin{align*}
    |I_2(x)| &\leq 8 C_{d,s}\|g\|_{L^\infty(\R^d)} \int_{|z|>1} \frac{\d z}{|z|^{d+2s}} =
    4 \|g\|_{L^\infty(\R^d)} \frac{d}{s} (1-s).
\end{align*}
For $I_1$ we note that, by Taylor expansion,
\[
\left|g(x+z)-g(x)- \nabla g(x) \cdot z - \frac{1}{2} z^T D^2 g(x) z\right| \leq K [g]_{C^{2,\alpha}(\R^d)} |z|^{2+\alpha}.
\]
Thus, by symmetry, 
\begin{align*}
\left|I_1(x)- \frac{4 C_{d,s}}{2} \int_{0<|z|<1} (z^T D^2 g(x) z )\frac{\d z}{|z|^{d+2s}} \right| &\leq 4 K [g]_{C^{2,\alpha}(\R^d)}C_{s,d} \omega_d\int_{0}^1 t^{1+\alpha-2s} \d t\\
&= 4K [g]_{C^{2,\alpha}(\R^d)} \frac{d}{2-2s+\alpha} (1-s). 
\end{align*}
Finally note that, again by symmetry, we have
\begin{align*}
\frac{4 C_{d,s}}{2} \int_{0<|z|<1} (z^T D^2 g(x) z )\frac{\d z}{|z|^{d+2s}}&= 2 C_{d,s}\sum_{i=1}^d \frac{\partial^2 g}{\partial x_i^2} (x) \int_{0<|z|<1} z_i^2\frac{\d z}{|z|^{d+2s}}\\
&= \frac{2 C_{d,s}}{d} \sum_{i=1}^d \frac{\partial^2 g}{\partial x_i^2} (x) \int_{0<|z|<1} |z|^2\frac{\d z}{|z|^{d+2s}}\\
&= \frac{2 C_{d,s}}{d} \left(\omega_d 
 \int_0^1 t^{1-2s} \d t\right) \sum_{i=1}^d \frac{\partial^2 g}{\partial x_i^2} (x) 
\\
&= \Delta g(x),
\end{align*}
which concludes the proof.
\end{proof}

\section{Stability of the Dirichlet problems}
\label{sec:stability}

\begin{proof}[Proof of \cref{prop:convexity_local}]
Expanding the square, one has the following vectorial identity:
\begin{align*}
    \abs{a-b}^2 + \abs{a}^2 + 2\langle a,b-a\rangle = \abs{b}^2,\qquad\forall a,b\in\R^d.
\end{align*}
Hence, it holds
\begin{align*}
    \int_\Omega \abs{\grad \phi(x) - \grad u(x)}^2\d x
    +
    2\int_\Omega 
    \langle
    \grad u(x), \grad \phi(x) -& \grad u(x)
    \rangle 
    \d x\\
    &=
    \int_\Omega 
    \abs{\grad \phi(x)}^2 \dx 
    -
    \int_\Omega 
    \abs{\grad u(x)}^2 \dx.
\end{align*}
Since $u$ is a minimizer of \labelcref{eq:local_problem}, it follows
\begin{align*}
    \int_\Omega 
    \langle
    \grad u(x), \grad w(x)
    \rangle 
    \d x
    =
    \int_\Omega 
    f(x) w(x) \d x,\qquad\forall w\in W^{1,2}_0(\Omega).
\end{align*}
Using this for $w := \phi-u \in W^{1,2}_0(\Omega)$, we obtain
\begin{align*}
    \int_\Omega \abs{\grad \phi(x) - \grad u(x)}^2\d x +
    2
    \int_\Omega 
    f(x) (\phi(x)-u(x)) \d x
    = 
    \int_\Omega 
    \abs{\grad \phi(x)}^2 \dx 
    -
    \int_\Omega 
    \abs{\grad u(x)}^2 \dx.
\end{align*}
Finally, since $\phi=u$ in $\R^d\setminus\Omega$, We can reorder the above identity and get
\begin{align*}
    \frac{1}{2}\norm{\nabla\phi-\nabla u}_{L^2(\R^d)}^2
    = 
    J(\phi) - J(u).
\end{align*}
\end{proof}
    \begin{proof}[Proof of \cref{prop:convexity_nonlocal}]
    Expanding the square one has
    \begin{align*}
        (a-b)^2 + a^2 + 2a(b-a) = b^2,\qquad\forall a,b\in\R.
    \end{align*}
    Taking $a=u_s(x)-u_s(y)$ and $b=\phi(x)-\phi(y)$ in the above identity, it holds that
    \begin{align*}
        &\frac{1}{2}[\phi - u_s]_{W^{s,2}(\R^d)}^2=
        \int_{\R^d}
        \int_{\R^d}
        \eta_s(\abs{x-y})
        \abs{(\phi(x)-u_s(x)) - (\phi(y)-u_s(y))}^2
        \d y\d x\\
        & = 
        \int_{\R^d}
        \int_{\R^d}
        \eta_s(\abs{x-y})
        \abs{(\phi(x)-\phi(y)) - (u_s(x)-u_s(y))}^2
        \d y\d x
        \\
        & = 
        -2
        \int_{\R^d}
        \int_{\R^d}
        \eta_s(\abs{x-y})
        (u_s(x)-u_s(y))((\phi(x)-\phi(y))-(u_s(x)-u_s(y)))\d y \d x
        \\
        &\quad 
        + 
        \int_{\R^d}
        \int_{\R^d}
        \eta_s(\abs{x-y})
        \abs{\phi(x)-\phi(y)}^2
        \d y \d x
        -
        \int_{\R^d}
        \int_{\R^d}
        \eta_s(\abs{x-y})
        \abs{u_s(x)-u_s(y)}^2
        \d y \d x.\\
        & = 
        -2
        \int_{\R^d}
        \int_{\R^d}
        \eta_s(\abs{x-y})
        (u_s(x)-u_s(y))((\phi(x)-u_s(x))-(\phi(y)-u_s(y)))\d y \d x
        \\
        &\quad 
        + 
        J_s(\phi) -J_s(u_s) + \int_{\Omega} f_s(x)(\phi(x)-u_s(x))\d x .
    \end{align*}
    Now note that, since $u_s$ is a minimizer of \labelcref{eq:nonlocal_problem}, it follows
    \begin{align*}
            2\int_{\R^d} \int_{\R^d}\eta_s(\abs{x-y})
        (u_s(x)-u_s(y))(w(x)-w(y))\d y \d x
        =
        \int_\Omega 
        f_s w \d x,\qquad\forall w\in W^{s,2}_0(\Omega).
    \end{align*}
    Using this for $w := \phi-u_s \in W^{s,2}_0(\Omega)$ we obtain the desired result.
    \end{proof}

\end{appendix}

\end{document}